\subjclass{11F80,11F41}
\keywords{Galois representations, Hilbert modular forms}
\def\dlog{\mathrm{dlog}}
\newcommand{\Tr}{\operatorname{Tr}}
\newcommand{\cyc}{\operatorname{cyc}}
\newcommand{\triv}{\operatorname{triv}}
\newcommand{\LBDJ}{L_{\mathrm{BDJ}}}
\newcommand{\LDDR}{L_{\mathrm{DDR}}}
\newcommand{\pr}{\operatorname{pr}}
\def\iso{\buildrel \sim \over \longrightarrow}
\newcommand{\id}{\operatorname{id}}
\newtheorem{thm}[subsubsection]{Theorem}
\newtheorem{lemma}[subsubsection]{Lemma}
\newtheorem{lem}[subsubsection]{Lemma}
\newtheorem{cor}[subsubsection]{Corollary}
\newtheorem{prop}[subsubsection]{Proposition}
\theoremstyle{definition}
\newtheorem{df}[subsubsection]{Definition}
\theoremstyle{remark}
\newtheorem{rem}[subsubsection]{Remark}
\def\numequation{\addtocounter{subsubsection}{1}\begin{equation}}
\def\nummultline{\addtocounter{subsubsection}{1}\begin{multline}}
\def\anumequation{\addtocounter{subsection}{1}\begin{equation}}
\def\anummultline{\addtocounter{subsection}{1}\begin{multline}}
\newif\iffinalrun
  \newcommand{\need}[1]{}
  \newcommand{\mar}[1]{}
  \newcommand{\need}[1]{{\tiny *** #1}}
  \newcommand{\mar}[1]{\marginpar{\raggedright\tiny #1}}
\newcommand{\F}{\FF}
\newcommand{\Q}{\QQ}
\newcommand{\Z}{\ZZ}
\newcommand{\m}{\frakm}
\newcommand{\FF}{{\mathbb F}}
\newcommand{\QQ}{{\mathbb Q}}
\newcommand{\ZZ}{{\mathbb Z}}
\newcommand{\cF}{{\mathcal F}}
\newcommand{\cM}{{\mathcal M}}
\newcommand{\cO}{{\mathcal O}}
\newcommand{\frakm}{\mathfrak{m}}
\newcommand{\Fbar}{\overline{\F}}
\newcommand{\Qbar}{\overline{\Q}}
\newcommand{\Fp}{\F_p}
\newcommand{\Fpbar}{\Fbar_p}
\newcommand{\Fpbartimes}{\Fpbar^{\times}}
\newcommand{\Zp}{\Z_p}
\newcommand{\Qp}{\Q_p}
\newcommand{\Qpbar}{\Qbar_p}
\DeclareMathOperator{\Gal}{Gal}
\DeclareMathOperator{\GL}{GL}
\DeclareMathOperator{\Hom}{Hom}
\newcommand{\Frob}{\mathrm{Frob}}
\newcommand{\sep}{\mathrm{sep}}
\newcommand{\rhobar}{\overline{\rho}}
                               \newcommand{\into}{\hookrightarrow}
\newcommand{\Art}{{\operatorname{Art}}}
\newcommand{\epsilonbar}{\overline{\epsilon}}
\newcommand{\Res}{\operatorname{Res}}
\begin{document}
\title{Explicit Serre weights for two-dimensional Galois representations}

\author[F. Calegari]{Frank Calegari}\email{fcale@math.uchicago.edu}
\address{Department of Mathematics, University of Chicago,
5734 S.\ University Ave., Chicago, IL 60637, USA}

\author[M. Emerton]{Matthew Emerton}\email{emerton@math.uchicago.edu}
\address{Department of Mathematics, University of Chicago,
5734 S.\ University Ave., Chicago, IL 60637, USA}

\author[T. Gee]{Toby Gee} \email{toby.gee@imperial.ac.uk} \address{Department of
  Mathematics, Imperial College London,
  London SW7 2AZ, UK}
\author[L. Mavrides]{Lambros Mavrides}
\email{lambros.mavrides@kcl.ac.uk} \address{Department of
  Mathematics, King's College London, London WC2R 2LS, UK}

\thanks{The first author is supported in part by NSF  Grant
  DMS-1648702. The second author is supported in part by the
  NSF grant DMS-1303450. The third author is 
  supported in part by a Leverhulme Prize, EPSRC grant EP/L025485/1, Marie Curie Career
  Integration Grant 303605, and by
  ERC Starting Grant 306326. }

\maketitle
\begin{abstract}
  We prove the explicit version of the Buzzard--Diamond--Jarvis
  conjecture formulated in~\cite{2016arXiv160307708D}. More precisely,
  we prove that it is equivalent to the original
  Buzzard--Diamond--Jarvis conjecture, which was proved for odd primes
  (under a mild Taylor--Wiles hypothesis) in earlier work of the third
  author and coauthors.
\end{abstract}
\setcounter{tocdepth}{1}
\tableofcontents
\section{Introduction}The weight part of Serre's conjecture
for 
Hilbert modular forms predicts the weights of the Hilbert modular
forms giving rise to a particular modular mod~$p$ Galois
representation, in terms of the restrictions of this Galois
representation to decomposition groups above~$p$. The conjecture was
originally formulated in~\cite{bdj} in the case that~$p$ is unramified in the
totally real field. Under a mild Taylor--Wiles hypothesis on the image
of the global Galois representation, this conjecture has been proved
for~$p>2$ in a series of papers of the third author and coauthors,
culminating in the paper~\cite{MR3324938}, which proves a
generalization allowing~$p$ to be arbitrarily ramified. We refer the
reader to the introduction to~\cite{MR3324938} for a discussion of
these results.

Let $K/\Qp$ be an unramified extension, and
let~$\rhobar:G_K\to\GL_2(\Fpbar)$ be a (continuous)
representation. If~$\rhobar$ is irreducible, then the recipe for
predicted weights in~\cite{bdj} is completely explicit, but in the
case that it is a non-split extension of characters, the recipe is in
terms of the reduction modulo~$p$ of certain crystalline extensions of
characters. This description is not useful for practical computations, and the
recent paper~\cite{2016arXiv160307708D} proposed an alternative recipe
in terms of local class field theory,
along with the Artin--Hasse exponential,
which can be made completely explicit in
concrete examples. (Indeed, \cite[\S9-10]{2016arXiv160307708D} gives
substantial numerical evidence for their conjecture.)

In this paper, we prove~\cite[Conj.\ 7.2]{2016arXiv160307708D}, which
says that the recipes of~\cite{bdj} and~\cite{2016arXiv160307708D}
agree. This is a purely local conjecture, and our proof is purely
local. Our main input is the results of~\cite{MR3164985} (and their
generalization to~$p=2$ in~\cite{Wangp2}). We briefly sketch our
approach. Suppose that~$\rhobar\cong
\begin{pmatrix}
  \chi_1&*\\0&\chi_2
\end{pmatrix}
$, and set~$\chi=\chi_1\chi_2^{-1}$. For a given Serre weight, the
recipes of~\cite{bdj} and~\cite{2016arXiv160307708D} determine
subspaces~$\LBDJ$ and~$\LDDR$ of $H^1(G_K,\chi)$, and we have to
prove that~$\LBDJ=\LDDR$.

Let~$K_\infty/K$ be the (non-Galois) extension obtained by adjoining a compatible
system of $p^n$th roots of a fixed uniformizer of~$K$ for all~$n$. The
restriction map $H^1(G_K,\chi)\to H^1(G_{K_\infty},\chi)$ is injective
unless~$\chi$ is the mod~$p$ cyclotomic character, and~\cite[Thm.\
7.9]{MR3164985} allows us to give an explicit description of the image
of~$\LBDJ$ in~$H^1(G_{K_\infty},\chi)$ in terms of Kisin modules. The
theory of the field of norms gives a natural isomorphism
of~$G_{K_\infty}$ with~$G_{k((u))}$, where~$k$ is the residue field
of~$K$, and we obtain a description of the image of~$\LBDJ$
in~$H^1(G_{k((u))},\chi)$ in terms of Artin--Schreier theory. On the other hand, we prove a compatibility of the Artin--Hasse
exponential with the field of norms construction that allows us to compute the
image of $\LDDR$ in~$H^1(G_{k((u))},\chi)$. We then use an explicit
reciprocity law of Schmid~\cite{MR1581502} to reduce the comparison
of~$\LBDJ$ and~$\LDDR$ to a purely combinatorial problem, which we
solve.

It is possible that the conjecture of~\cite{2016arXiv160307708D} could
be extended to the case that~$p$ ramifies in~$K$; we have not tried to
do this, but we expect that if such a generalization exists, it could
be proved by the methods of this paper, using the results
of~\cite{MR3324938}. 

The fourth author's PhD thesis~\cite{Mavrides} proved~\cite[Conj.\
7.2]{2016arXiv160307708D} in generic cases using similar techniques to
those of this paper in the setting of $(\varphi,\Gamma)$-modules
(using the results of~\cite{MR2776609} where we appeal
to~\cite{MR3164985}), while the first three authors arrived separately
at the strategy presented here for resolving the general case.

\subsection{Acknowledgements} We would all like to thank Fred Diamond
for many helpful conversations over the last decade about the weight
part of Serre's conjecture, and for his key role in formulating the
various generalisations of it to Hilbert modular forms. The fourth
author is particularly grateful for his support and guidance over the
course of his graduate studies. We would also like to thank him for
suggesting that the four of us write this joint paper. We are grateful
to Ehud de Shalit for sending us a copy of his paper~\cite{MR1196529},
which led us to Schmid's explicit reciprocity law. We would also like
to thank David Savitt for informing us of the forthcoming
paper~\cite{Wangp2} of Xiyuan Wang, and Victor Abrashkin for informing
us of his paper~\cite{MR1446195}, which extends the results
of~\cite{MR1196529}, and much as in the present paper relates these
results to the Artin--Hasse exponential.
Finally, we would like to thank the anonymous referee for their careful
reading of the paper. 

\section{Notation}\label{sec:notation}We follow the conventions of~\cite{MR3324938}, which are the same as
those in the arXiv version of~\cite{MR3164985} (see~\cite[App.\
A]{MR3324938} for a correction to some of the indices in the published
version of~\cite{MR3164985}). Let~$p$ be prime, and let $K/\Qp$ be a finite
unramified extension of degree~$f$, with residue field $k$. Embeddings
$\sigma:k\into \Fpbar$ biject with $\Qp$-linear embeddings $K\into\Qpbar$, and we
choose one such embedding $\sigma_0:k\into\Fpbar$, and recursively require
that~$\sigma_{i+1}^p=\sigma_i$. Note
that~$\sigma_{i+f}=\sigma_i$. Note also that this convention is
opposite to that of~\cite{2016arXiv160307708D}, so that
their~$\sigma_i$ is our~$\sigma_{-i}$; consequently, to compare our
formulae to those of~\cite{2016arXiv160307708D}, one has to negate the
indices throughout.

If~$\pi$ is a root of $x^{p^f-1}+p=0$ then we have the fundamental
character~$\omega_f:G_K\to k^\times$ defined
by \[\omega_f(g)=g(\pi)/\pi\pmod{\pi\cO_{K(\pi)}}.\] The composite
of~$\omega_f$ with the Artin map~$\Art_K$ (which we normalize so that a
uniformizer corresponds to a geometric Frobenius element) is the
homomorphism $K^\times\to k^\times$ sending~$p$ to~$1$ and sending
elements of $\cO_{K}^\times$ to their reductions modulo~$p$. For
each~$\sigma:k\into\Fpbar$, we
set~$\omega_\sigma:=\sigma\circ\omega|_{I_K}$, and
$\omega_i:=\omega_{\sigma_i}$, so that in particular we
have~$\omega_{i+1}^p=\omega_i$.

If $l/k$ is a finite extension, we choose an
embedding~$\widetilde{\sigma}_0:l\into\Fpbar$ extending~$\sigma_0$, and again set $\widetilde{\sigma}_i=\widetilde{\sigma}_{i+1}^p$.
We have an isomorphism
\numequation
\label{eqn:product over embeddings}
l\otimes_{\Fp} \Fpbar \iso \prod_{\widetilde{\sigma}_i} 
\Fbar_p,
\end{equation}
with the projection onto the factor labelled by $\widetilde{\sigma}_i$
being given by $x\otimes y \mapsto \widetilde{\sigma}_i(x)y.$
Under this isomorphism,
the automorphism $\varphi \otimes \id$ on $l\otimes_{\Fp} \Fpbar$
becomes identified with the automorphism on $\prod \Fbar_p$ given by
$(y_i) \mapsto (y_{i-1}).$

If~$\cM$ is an $l \otimes_{\Fp}\Fpbar$-module equipped with a $\varphi$-linear
endomorphism~$\varphi$, then the isomorphism~(\ref{eqn:product over embeddings})
induces a corresponding decomposition $\cM \iso \prod_i \cM_i,$
and the endomorphism $\varphi$ of $\cM$ induces $\Fbar_p$-linear morphisms
$\varphi:\cM_{i-1}\to \cM_i$.

\section{Results}
\subsection{Fields of norms}We briefly recall (following~\cite[\S1.1.12]{MR2600871}) the theory of the field of norms and of \'etale
$\varphi$-modules, adapted to the case at hand. For each~$n$, let $(-p)^{1/p^n}$ be a
choice of $p^n$-th root of~$-p$, chosen so
that~$((-p)^{1/p^{n+1}})^p=(-p)^{1/p^n}$, and let~$K_n=K((-p)^{1/p^n})$. Write
$K_\infty=\cup_{n}K_n$. Then by the theory of the field of
norms, \[\varprojlim_{N_{K_{n+1}/K_n}}K_n\] (the transition maps being
the norm maps) can be identified with~$k((u))$, with $((-p)^{1/p^n})_n$
corresponding to~$u$.
If $F$ is a finite extension of $K$ (inside some given algebraic closure
of $K$ containing $K_{\infty}$) then $F_{\infty} := F K_{\infty}$ is a finite
extension of $K_{\infty}$, and applying the field of norms construction
to $F_{\infty}$, we obtain a finite separable extension
 \[\cF:=\varprojlim_{N_{FK_n/FK_{n-1}}}FK_n,\] 
of $k((u))$.
If $F$ is Galois over $K$, then $F_{\infty}$ is Galois over $K_{\infty}$,
and also $\cF$ is Galois over $k((u))$,
and there is a natural isomorphism of Galois groups
\numequation
\label{eqn:Galois iso for finite extensions}
\Gal\bigl(\cF/k((u))\bigr) \iso \Gal(F_{\infty}/K_{\infty}),
\end{equation}
and, composing with the canonical homomorphism
$\Gal(F_{\infty}/K_{\infty}) \to \Gal(F/K),$
a natural homomorphism of Galois groups
\numequation
\label{eqn:Galois hom} 
\Gal\bigl(\cF/k((u))\bigr) \to \Gal(F/K).
\end{equation}

Every finite extension of $K_{\infty}$ arises as such an $F_{\infty}$,
and in this manner we obtain a functorial bijection between
finite extensions of $K_{\infty}$ and finite separable extensions
$\cF$ of $k((u))$.
In particular, the various isomorphisms~(\ref{eqn:Galois iso for finite 
	extensions}) piece together to induce a natural isomorphism
of absolute Galois groups
\numequation
\label{eqn:absolute Galois iso}
G_{K_\infty}=G_{k((u))}. 
\end{equation}

The utility of the isomorphism~(\ref{eqn:absolute Galois iso}) arises from
the fact that
there is an equivalence of abelian categories between the category of
finite-dimensional $\Fpbar$-representations $V$ of $G_{k((u))}$ and
the category of \emph{\'etale $\varphi$-modules}. The latter are by
definition finite $k((u))\otimes_{\Fp} \Fpbar$-modules $\mathcal{M}$
equipped with a $\varphi$-semilinear map $\varphi:\cM\to\cM$, with the
property that the induced  $k((u))\otimes_{\Fp} \Fpbar$-linear map
$\varphi^*\cM\to\cM$ is an isomorphism. This equivalence of categories
preserves lengths in the obvious sense, and is given by the
functors\[T:\cM\to(k((u)))^\sep\otimes_{k((u))}\cM)^{\varphi=1}\](where
$k((u))^{\sep}$ is a separable closure of~$k((u))$) and \[V\mapsto (k((u))^\sep\otimes_{\Fp}V)^{G_{k((u))}}.\]
The isomorphism~(\ref{eqn:absolute Galois iso}) then allows us to describe
finite-dimensional representations of $G_{K_{\infty}}$ 
over $\Fbar_p$
via \'etale $\varphi$-modules.  In the subsection~\ref{subsec:
  etale phi modules from GLS}
we make this description completely explicit in the context
of (the restriction to $K_{\infty}$ of) the crystalline extensions of characters
that arise in the conjecture of~\cite{bdj}.

The above isomorphisms of Galois groups are compatible with local
class field theory in a natural way.
Namely, if $F/K$ and $\cF/k((u))$ are as above,
  then the projection map $k((u))=\varprojlim_{N_{K_{n+1}/K_n}}K_n\to K$
  induces a natural map
  \numequation
  \label{eqn:map on norms}
k((u))^\times/N_{\cF/k((u))^\times}\cF^\times\to
K^\times/N_{F/K}F^\times,
\end{equation}
and we have the following result.
\begin{lem}
  \label{lem: class field theory field of norms}If~$F/K$ is a finite
  abelian extension,  then the following
diagram commutes:
\[\xymatrix{\Gal(\cF/k((u)))\ar^-{\mathrm{(\ref{eqn:Galois hom})}}
		[d]\ar[rr]^{\Art^{-1}_{k((u))}} & &
    k((u))^\times/N_{\cF/k((u))^\times}\cF^\times\ar^-{\mathrm{(\ref{eqn:map on norms})}}[d]\\ \Gal(F/K)\ar[rr]^{\Art^{-1}_K} & & K^\times/N_{F/K}F^\times}  \]
\end{lem}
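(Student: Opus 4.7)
The plan is to realize both horizontal maps in the diagram as the $n=0$ projection of inverse limits over the tower $K \subseteq K_1 \subseteq K_2 \subseteq \cdots$, thereby reducing commutativity to the norm-compatibility of local class field theory at each finite layer. Concretely, for each $n$ the Galois group $\Gal(FK_n/K_n)$ is identified with $\Gal(F/F \cap K_n) \subseteq \Gal(F/K)$ and is therefore abelian, so local class field theory furnishes isomorphisms
\[\Art_{K_n}^{-1} : \Gal(FK_n/K_n) \iso K_n^\times/N_{FK_n/K_n}(FK_n)^\times,\]
compatibly as $n$ varies, via restriction on the left and the norms $N_{K_{n+1}/K_n}$ on the right.

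Next I would pass to the inverse limit. Choose $N$ large enough that $F \cap K_\infty \subseteq K_N$; then for $n \ge N$ the restriction maps on the left become identities, the inverse limit is $\Gal(F_\infty/K_\infty)$, and the isomorphism (\ref{eqn:Galois iso for finite extensions}) identifies this with $\Gal(\cF/k((u)))$. On the right, the inverse limit is $k((u))^\times/N_{\cF/k((u))}\cF^\times$: the numerator is $k((u))^\times$ by the very definition of the field of norms, and the denominators are compatible because $N_{\cF/k((u))}$ is itself constructed as the inverse limit of the $N_{FK_n/K_n}$'s via transitivity of norms. Projecting this whole inverse system to the $n=0$ layer recovers (\ref{eqn:map on norms}) on the right and $\Art_K^{-1}$ on the bottom of the diagram.

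The one substantive step is then to identify the isomorphism $\Gal(\cF/k((u))) \iso k((u))^\times/N_{\cF/k((u))}\cF^\times$ obtained as the limit of the $\Art_{K_n}^{-1}$'s with $\Art_{k((u))}^{-1}$ itself; the commutativity of the diagram follows formally by projection to $n=0$. This identification is essentially Fontaine--Wintenberger's characterization of class field theory for $k((u))$ via the norm tower defining the field of norms, and is where I would expect to consult the literature. Alternatively, since both maps are isomorphisms onto the same target, one can verify agreement on generators: reducing to the unramified and totally ramified cases, the unramified case is immediate (the Frobenius of $\cF/k((u))$ and of $F^{\mathrm{ur}}/K$ are matched, while $u \in k((u))^\times$ projects to $(-p)^{1/p^n}$ and hence to $-p$, which is $p$ modulo units that are norms from $F^{\mathrm{ur}}$), and the totally ramified case is handled via Lubin--Tate theory applied at each $K_n$. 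The main obstacle is this compatibility of $\Art_{k((u))}$ with the tower; everything else in the argument is formal.
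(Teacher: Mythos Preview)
Your approach is correct and lands in essentially the same place as the paper's proof: both identify the substantive content as the compatibility of $\Art_{k((u))}$ with the tower, both reduce this to the unramified and totally ramified cases separately, and both defer to the literature for the general statement. The paper cites Laubie's result for APF extensions and, for the direct check in the totally ramified case, invokes Dwork's explicit description of the reciprocity map (Serre, \emph{Local Fields}, XIII \S5) rather than Lubin--Tate theory; your inverse-limit scaffolding is a clean way to isolate the key step, but it does not reduce the actual work, which is exactly that compatibility check you flag at the end.
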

\begin{proof}
  This is easily checked directly, and is a special case
  of~\cite[Prop.\ 5.2]{MR2914899}, which proves a generalization to
  higher-dimensional local fields; see also~\cite{MR951749}, where the
  analogous result is proved for general APF extensions (strictly
  speaking, the result of~\cite{MR951749} does not apply as written in
  our situation, as the extension $K_\infty/K$ is not Galois; but in
  fact the argument still works). In brief, it is enough to check
  separately the cases that~$F/K$ is either unramified or totally
  ramified; in the former case the result is immediate, while the
  latter case follows from Dwork's description of Artin's reciprocity
  map for totally ramified abelian extensions, \cite[XIII \S5 Cor.\ to
  Thm.\ 2]{MR554237}.
\end{proof}

\subsection{Compatibility of pairings}\label{subsec: compatibility of pairings}
It will be convenient to establish a further compatibility between various natural pairings.
For a field~$M$, let~$M^{(p)}/M$ denote the maximal exponent~$p$
abelian extension (inside some fixed algebraic closure). 
If~$M_{\infty}/M$ is an extension, then we have a diagram
as follows (where~$\pr$ is the natural map given by restriction of
automorphisms of~$M_\infty^{(p)}$ to~$M^{(p)}$):
$$
\begin{diagram}
\Gal(M^{(p)}_{\infty}/M_{\infty}) \   &   \times &  \ H^1(G_{M_{\infty}},\Fpbar)  & \rTo & \Fpbar \\
\dTo_{\pr} & & \uTo^{\iota} & & \\
\Gal(M^{(p)}/M)  \   & \times &  \ H^1(G_M,\Fpbar) & \rTo &  \Fpbar. \\
\end{diagram}
$$
\begin{lemma}\label{lem: pairing compatibility} The diagram commutes, in the sense that~$\langle \pr \alpha, \beta \rangle = \langle \alpha, \iota \beta \rangle$.
\end{lemma}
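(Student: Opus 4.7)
The plan is to reduce the asserted commutativity to a tautology by identifying both pairings with honest evaluation pairings of homomorphisms. Since $\Fpbar$ carries the trivial $G_M$-action and is annihilated by $p$, a continuous $1$-cocycle is just a continuous homomorphism, and any such homomorphism factors through the maximal exponent-$p$ abelian quotient of $G_M$. Thus
\[
H^1(G_M,\Fpbar) = \Hom_{\mathrm{cts}}\bigl(\Gal(M^{(p)}/M),\Fpbar\bigr),
\]
and similarly with $M$ replaced by $M_\infty$. Under these identifications, the two pairings in the diagram are simply $(\sigma,\beta)\mapsto \beta(\sigma)$.

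Before proving the compatibility I would verify that the left-hand vertical map $\pr$ is well defined in the obvious geometric way. Since $M\subseteq M_\infty$, the compositum $M^{(p)}\cdot M_\infty$ is an abelian extension of $M_\infty$ of exponent dividing $p$, hence contained in $M_\infty^{(p)}$. Consequently $M^{(p)}\subseteq M_\infty^{(p)}$, and restriction of automorphisms from $M_\infty^{(p)}$ to $M^{(p)}$ yields the map $\pr\colon \Gal(M_\infty^{(p)}/M_\infty)\to \Gal(M^{(p)}/M)$; up to the identifications above, this is exactly the map on maximal exponent-$p$ abelianized quotients induced by the inclusion $G_{M_\infty}\hookrightarrow G_M$.

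With these identifications in hand, the vertical map $\iota$ (restriction in cohomology) sends a homomorphism $\beta\colon \Gal(M^{(p)}/M)\to \Fpbar$ to the composite $\beta\circ \pr\colon \Gal(M_\infty^{(p)}/M_\infty)\to \Fpbar$; this is the content of the naturality of the standard identification of $H^1$ with continuous homomorphisms. The desired equality is then immediate: for any $\alpha\in \Gal(M_\infty^{(p)}/M_\infty)$ and $\beta\in H^1(G_M,\Fpbar)$,
\[
\langle \alpha, \iota\beta\rangle \;=\; (\iota\beta)(\alpha) \;=\; \beta(\pr\alpha) \;=\; \langle \pr\alpha, \beta\rangle.
\]
The only step requiring any care is the identification of $\iota$ with precomposition by $\pr$ under the Pontryagin-style description of $H^1$, but this is a routine unwinding of definitions rather than a genuine obstacle.
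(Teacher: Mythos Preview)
Your proof is correct and follows essentially the same approach as the paper: both identify $H^1(G_M,\Fpbar)$ with $\Hom(G_M,\Fpbar)$, observe that the pairings are given by evaluation and that $\iota$ is the natural restriction map, and conclude that the identity $\langle \pr\alpha,\beta\rangle = \langle\alpha,\iota\beta\rangle$ is then tautological. You have simply spelled out in more detail (the well-definedness of $\pr$ and the identification of $\iota$ with precomposition by $\pr$) what the paper dismisses as ``clear.''
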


\begin{proof} Since~$H^1(G_M,\Fpbar)=\Hom(G_M,\Fpbar)$ (and similarly
  for~$M_\infty$), since the pairings are given by evaluation, and since~$\iota$
  is the natural restriction map, this is clear.
\end{proof}

Suppose now that~$M$ is a finite extension of~$\Qp$ with residue
field~$l$, and that 
~$\pi$ is a uniformizer of~$M$. If
~$M_{\infty}/M$ is the extension given by a compatible choice
of~$p$-power roots  of~$\pi$, then
$$\Gal(M^{(p)}_{\infty}/M_{\infty}) \simeq  l((u))^{\times} \otimes \Fp$$
via the field of norms construction together with local class field theory
(applied to~$l((u))$). 

On the other hand,
taking Galois cohomology of the short exact sequence
$$0 \to \Fbar_p \to l((u))^{\sep} \otimes_{\Fp} \Fpbar
\buildrel \psi\otimes \id
\over \longrightarrow l((u))^{\sep} \otimes_{\Fp} \Fpbar \to 0,$$
where $\psi:l((u))^{\sep} \to l((u))^{\sep}$
is the Artin--Schreier map defined by $\psi(x) = x^p - x$,
yields 
an isomorphism
\begin{multline*}
H^1(G_{M_{\infty}},\Fpbar)=H^1(G_{l((u))},\Fpbar) 
= \Hom(G_{l((u))}, \Fbar_p) \simeq \bigl(l((u))/\psi l((u))\bigr)
\otimes_{\Fp} \Fpbar;
\end{multline*}
concretely, 
the element~$a\in l((u))$ corresponds to the
homomorphism~$f_a:G_{l((u))}\to\Fp$ given by
$f_a(g)=g(x)-x$, where $x \in l((u))^{\sep}$ is chosen
so that $\psi(x)=a$. (See e.g.\ ~\cite[X \S3(a)]{MR554237} for more details.)
\begin{thm}\label{thm: schmid reciprocity}
 Let ~$\sigma_b \in \Gal(M^{(p)}_{\infty}/M_{\infty})$ be the Galois element
 corresponding via the local Artin map
 to an element~$b \in l((u))^{\times}\otimes \Fp$, and let
  $f_a$ be the element of~$H^1(G_{M_{\infty}},\Fpbar)$ corresponding
  to an element
  ~$a \in \bigl(l((u))/\psi l((u))\bigr)\otimes_{\Fp} \Fpbar$. 
  Then 
  $$\langle f_a, \sigma_b \rangle = \Tr_{l\otimes_{\Fp} \Fpbar/\Fpbar}\left(\Res a \cdot \frac{db}{b}\right).$$
\end{thm}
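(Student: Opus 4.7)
The plan is to reduce the identity to Schmid's classical explicit reciprocity law~\cite{MR1581502}, which computes precisely the Artin--Schreier--Hilbert symbol pairing for a local field of characteristic~$p$.

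First, I would observe that both sides of the identity depend only on data attached to the characteristic-$p$ local field $l((u))$. The pairing in the statement is the evaluation pairing
\[
\Hom(G_{M_\infty}, \Fpbar) \times \Gal(M^{(p)}_\infty/M_\infty) \to \Fpbar,
\]
and under the field-of-norms isomorphism~(\ref{eqn:absolute Galois iso}), passed down to the maximal exponent-$p$ abelian quotient, this becomes the corresponding evaluation pairing for $l((u))$. By construction, $\sigma_b$ corresponds to $b$ under the local Artin map for $l((u))$, and $f_a$ corresponds to $a$ under the Artin--Schreier isomorphism described in the paragraph preceding the theorem.

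Consequently, the identity to prove is equivalent to the statement that the Artin--Schreier--Hilbert pairing
\[
\bigl(l((u))/\psi l((u))\bigr) \times \bigl(l((u))^\times \otimes \Fp\bigr) \to \Fp
\]
is computed by $\langle a, b \rangle = \Tr_{l/\Fp}\!\bigl(\Res(a \cdot db/b)\bigr)$. This is exactly Schmid's explicit reciprocity law~\cite{MR1581502} (compare also the treatment by de Shalit~\cite{MR1196529} and the generalization of Abrashkin~\cite{MR1446195}). Extending $\Fpbar$-linearly in the first variable, and using that $\Tr_{l/\Fp} \otimes \id_{\Fpbar}$ agrees with $\Tr_{l \otimes_{\Fp} \Fpbar / \Fpbar}$, then yields the stated formula.

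The main work, and the only real obstacle, is bookkeeping: one must verify that all normalizations match those in Schmid's original statement, in particular the choice of arithmetic vs.\ geometric Frobenius in the local Artin map (which affects whether the formula involves $b$ or $b^{-1}$) and the sign conventions in the Artin--Schreier cocycle $f_a(g) = g(x) - x$. Given the explicit descriptions recalled in~\cite[X \S3]{MR554237} and the normalization fixed in Section~\ref{sec:notation}, this amounts to a routine unpacking of definitions rather than a substantive argument.
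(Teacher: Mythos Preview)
Your proposal is correct and matches the paper's approach: the paper's proof consists solely of citing Schmid~\cite{MR1581502} and, for a modern treatment, \cite[XIV Cor.\ to Prop.\ 15]{MR554237}. Your additional remarks on the field-of-norms identification, the $\Fpbar$-linear extension, and the normalization bookkeeping are accurate elaborations, but the core argument is identical.
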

\begin{proof}This was first proved in~\cite{MR1581502}; for a more
  modern proof, see~\cite[XIV Cor.\ to Prop.\ 15]{MR554237}.
  \end{proof}

\subsection{Crystalline extension classes and $\LBDJ$}\label{subsec:
  etale phi modules from GLS}
We begin by briefly recalling some of the main results of~\cite{MR3164985}. For
each~$0\le i\le f-1$ we fix an integer~$r_i\in[1,p]$; we then
define~$r_i$ for all integers~$i$ by demanding that~$r_{i+f}=r_i$. We
let~$J$ be a subset of~$\{0,\dots,f-1\}$, and we assume that~$J$ is
maximal in the sense of~\cite[\S7.2]{2016arXiv160307708D}; in other
words, we assume that:
\begin{itemize}
\item if for some $i>j$ we have $(r_j,\dots,r_i)=(1,p-1,\dots,p-1,p)$,
  and $j+1,\dots,i\notin J$, then~$j\notin J$; and
\item if all the $r_i$ are equal to~$p-1$, or if~$p=2$ and all of
  the~$r_i$ are equal to~$2$, then~$J$ is nonempty.
\end{itemize}
We let~$\chi:G_K\to\Fpbartimes$ be a character with the property
that\[\chi|_{I_K}=\prod_{j\in J}\omega_j^{r_j}\prod_{j\notin
    J}\omega_j^{-r_j}.\] We let~$\LBDJ$ denote the subset
of~$H^1(G_K,\chi)$ consisting of those classes  corresponding to
extensions of the trivial character by~$\chi$ that arise as the
reductions of crystalline representation whose $\sigma_i$-labelled
Hodge--Tate weights are $\{0,(-1)^{i\notin J}r_i\}$, where
$(-1)^{i\notin J}$ is $1$ if $i\in J$ and $-1$ otherwise. It follows
from the proof of~\cite[Thm.\ 9.1]{MR3164985}, together
with~\cite[Lem.\ 9.3, Lem.\ 9.4]{MR3164985} and (in the case that~$p=2$) the results
of~\cite{Wangp2} that:
\begin{itemize}
\item $\LBDJ$ is an~$\Fpbar$-subspace of~$H^1(G_K,\chi)$.
\item An extension class is in~$\LBDJ$ if and only if it admits a
  \emph{reducible} crystalline lift whose $\sigma_i$-labelled
Hodge--Tate weights are $\{0,(-1)^{i\notin J}r_i\}$.
\item If $J=\{0,\dots,f-1\}$ and all~$r_i=p$, then~$\LBDJ=H^1(G_K,\chi)$.
\item Assume that we are not in the case of the previous bullet point.
  Then $\dim_{\Fpbar}\LBDJ=|J|$, unless~$\chi=1$, in which case $\dim_{\Fpbar}\LBDJ=|J|+1$.
\end{itemize}

We recall below from~\cite{2016arXiv160307708D} the definition of
another subspace of~$H^1(G_K,\chi)$, denoted $\LDDR$; our main result, then, is
that~$\LBDJ=\LDDR$. We begin with an easy special case.

\begin{lem}
  \label{lem: cyclotomic J=S case of main thm}If
  $J= \{0,\dots,f-1\}$ and every~$r_i=p$ then $\LBDJ=\LDDR$.
\end{lem}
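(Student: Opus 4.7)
The plan is to dispatch this as the degenerate ``boundary'' case of the comparison, where both sides become the full cohomology group, so the identification $\LBDJ = \LDDR$ reduces to a tautology on each side. Concretely, the case $J = \{0,\dots,f-1\}$ with every $r_i = p$ is exactly the one singled out in the third bullet point of Subsection~\ref{subsec: etale phi modules from GLS}, which asserts directly that $\LBDJ = H^1(G_K, \chi)$. So the entire content of the lemma is to check the matching statement $\LDDR = H^1(G_K, \chi)$.

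To establish this, I would first pin down the character: from the defining relations $\omega_{i+1}^p = \omega_i$ one computes
\[
\chi|_{I_K} = \prod_{j=0}^{f-1} \omega_j^p \;=\; \prod_{j=0}^{f-1} \omega_{j-1} \;=\; \prod_{j=0}^{f-1} \omega_j,
\]
which is the restriction to $I_K$ of the mod $p$ cyclotomic character. Thus $\chi = \overline{\varepsilon}\cdot\eta$ for some unramified character $\eta$, and in particular $H^1(G_K,\chi)$ has dimension $f+1$ by Tate's local Euler characteristic formula together with local duality (and agrees with the count $|J|+1$ from the last bullet, at least when $\eta$ is trivial).

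Next I would unwind the definition of $\LDDR$ from \cite[\S7]{2016arXiv160307708D} in this particular configuration. The point is that $\LDDR$ is cut out inside $H^1(G_K,\chi)$ by a collection of conditions indexed by the embeddings $\sigma_i$ and by the integers $r_i$; when $J$ is the full set $\{0,\dots,f-1\}$ and every $r_i$ takes the extreme value $p$, each defining condition becomes vacuous (equivalently, the local class field theory ``recipe'' places no restriction at all on the extension class). Carrying this out is a direct inspection of the formulas and should take only a few lines, the content being that the Artin--Hasse ingredient reduces to the identity in this extreme configuration.

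Combining the two halves, $\LBDJ = H^1(G_K, \chi) = \LDDR$, which gives the claim. The main (and only) obstacle here is a careful bookkeeping with the sign convention on indices flagged in Section~\ref{sec:notation} (the DDR paper's $\sigma_i$ being our $\sigma_{-i}$), to make sure that the ``full subspace'' case on the DDR side really does correspond to the configuration $J = \{0,\dots,f-1\}$, $r_i = p$ as indexed in the present paper. No deeper input (e.g.\ the Kisin module description or Schmid's reciprocity law) is needed for this edge case; those tools come in only for the generic comparison addressed in the subsequent lemmas.
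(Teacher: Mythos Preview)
Your approach is the same as the paper's: show that both $\LBDJ$ and $\LDDR$ equal the full cohomology group $H^1(G_K,\chi)$, with the $\LBDJ$ side coming directly from the third bullet point in Subsection~\ref{subsec: etale phi modules from GLS}. The paper's proof is exactly this, in two sentences.

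Where you diverge is in the $\LDDR$ half, and there you are making it harder than necessary. You propose to go back to \cite[\S7]{2016arXiv160307708D} and argue that the defining conditions become vacuous, with some hand-waving about the Artin--Hasse exponential ``reducing to the identity''. But the paper has its own Definition~\ref{defn: LDDR}, and from that the claim is immediate: since $J=\{0,\dots,f-1\}$ there is no $i\notin J$, so $\mu(J)=J$ and $\LDDR$ already contains all the $c_i$; and the clause for $c_{\cyc}$ is triggered precisely by the hypotheses $\chi=\epsilonbar$, $J=\{0,\dots,f-1\}$, every $r_i=p$. Hence $\LDDR$ contains the entire dual basis and equals $H^1(G_K,\chi)$. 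Your computation of $\chi|_{I_K}$ and the Euler-characteristic dimension count are correct in spirit but superfluous (and your $f+1$ is only right when the unramified twist $\eta$ is trivial, as you note; when $\eta\ne 1$ the dimension is $f$ and the basis is just the $c_i$). None of that is needed once you read $\LDDR$ off from Definition~\ref{defn: LDDR}.
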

\begin{proof}
  In this case we have ~$\LDDR=H^1(G_K,\chi)$ by definition (see
  Definition~\ref{defn: LDDR} below), and we already noted above that
  $\LBDJ=H^1(G_K,\chi)$.
\end{proof}

We  can and do exclude the case covered by~Lemma~\ref{lem: cyclotomic J=S case of main thm}
from now on; that is, in addition to the assumptions made above, we
assume that:
\begin{itemize}
\item if every~$r_i$ is equal to~$p$, then~$J\ne\{0,\dots,f-1\}$.
\end{itemize}

If~$\chi=\epsilonbar$ then the peu ramifi\'e subspace
of~$H^1(G_K,\epsilonbar)$ is by definition the codimension one subspace spanned by
the classes corresponding via Kummer theory to elements
of~$\cO_K^\times$. Since we have excluded the cases covered by
Lemma~\ref{lem: cyclotomic J=S case of main thm}, $\LBDJ$ is contained in the peu ramifi\'e subspace
  of~$H^1(G_K,\epsilonbar)$ by~\cite[Thm.\ 4.9]{MR3352531}. 

By~\cite[Lem.\ 5.4.2]{MR3324938}, for any~$\chi\ne\epsilonbar$ the natural
restriction map $H^1(G_K,\chi)\to H^1(G_{K_\infty},\chi)$ is
injective, while if~$\chi=\epsilonbar$ then the kernel is spanned
by the tres ramifi\'e class corresponding to~$-p$; in particular, the
restriction of this map to~$\LBDJ$ is injective. The following theorem
describes the image of~$\LBDJ$; before stating it, we introduce some
notation that we will use throughout the paper.

Write~$\chi$ as a power
of~$\omega_0$ 
times an unramified
character~$\mu:\Gal(L/K)\to\Fpbartimes$, and write~$\mu(\Frob_K)=a$,
so that~$a^{[l:k]}=1$; here $\Frob_K\in\Gal(L/K)$ denotes the arithmetic
Frobenius. For
each~$\sigma:k\into\Fpbar$, we let~$\lambda_{\sigma,\mu}$ be the
element $(1,a^{-1},\dots,a^{1-[l:k]})\in l\otimes_{k,\sigma}\Fpbar$,
so that~$\lambda_{\sigma,\mu}$ is a basis of the one-dimensional $\Fpbar$-vector space
$(l\otimes_{k,\sigma}\Fpbar)^{\Gal(L/K)=\mu}$. Similarly, we let $\lambda_{\sigma,\mu^{-1}}$ be the
element $(1,a,\dots,a^{[l:k]-1})\in l\otimes_{k,\sigma}\Fpbar$.

\begin{thm}\label{thm: form of Kisin modules}The subspace~$\LBDJ$ of 
	$H^1(G_K,\chi)$ consists of precisely those classes whose restrictions
  to $H^1(G_{K_\infty},\chi)$ can be represented by \'etale
  $\varphi$-modules~$\cM$ of the following form:

 Set $h_i = r_i$ if $i \in
  J$ and $h_i = 0$ if $i \not\in J$.  Then 
  we can choose bases $e_i,f_i$
  of the $\cM_i$ so that $\varphi$  has the form
  \begin{eqnarray*}
  \varphi(e_{i-1}) & =&  u^{r_i-h_i} e_{i}\\
    \varphi(f_{i-1})& =& (a)_i u^{h_i} f_{i} + x_i e_{i}
 \end{eqnarray*}
 Here
 $(a)_i=1$ for $i\ne 0$, and equals $a=\mu(\Frob_K)$
for~$i=0$; and we have $x_i = 0$ if $i \not\in J$ and $x_i \in \Fpbar$
if $i \in
J$,
except in the case that~$\chi=1$. 

If $\chi=1$ then~$a=1$, and if we fix some $i_0 \in J$, then $x_{i_0}$ is
allowed to be of the form $x_{i_0}'+x_{i_0}''u^p$ with
$x'_{i_0},x''_{i_0}\in\Fpbar$ \emph{(}while the other~$x_i$ are in~$\Fpbar$\emph{)}.

In every case, the~$x_i$ are uniquely determined by~$\cM$. 
\end{thm}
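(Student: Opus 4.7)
The plan is to leverage the explicit classification of reducible crystalline extensions in terms of Breuil--Kisin modules from~\cite{MR3164985} (and its extension to $p=2$ in~\cite{Wangp2}), and then pass to étale $\varphi$-modules by inverting $u$, using the field of norms equivalence of categories recalled above.

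For the forward inclusion, I would take any class in $\LBDJ$, represent it by a reducible crystalline lift of an extension of $1$ by $\chi$ with the specified Hodge--Tate weights, and invoke the rank-$1$ case of the classification to extract the diagonal entries $u^{r_i-h_i}$ and $(a)_i u^{h_i}$. The splitting $r_i=(r_i-h_i)+h_i$ is determined by the choice of $J$ (the ``large'' Hodge--Tate weight $r_i$ is concentrated on whichever rank-$1$ piece $J$ dictates), and the constants $(a)_i$ reflect the standard convention of placing all unramified data at the distinguished embedding $\sigma_0$. An upper-triangular extension of rank-$1$ Kisin modules is then parametrized by entries $x_i\in\Fpbar[[u]]$, and an explicit valuation-and-degree analysis as carried out in~\cite[Lem.~9.3, Lem.~9.4]{MR3164985} forces $x_i=0$ if $i\notin J$ and $x_i\in\Fpbar$ if $i\in J$. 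The extra summand $x''_{i_0}u^p$ available when $\chi=1$ is precisely what accounts for the additional dimension in the already-recalled count $\dim_{\Fpbar}\LBDJ=|J|+1$.

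For the reverse inclusion, any étale $\varphi$-module of the displayed form lifts tautologically to a Kisin module by taking the same matrices over $\Fpbar[[u]]$, and \cite{MR3164985} (extended by \cite{Wangp2} when $p=2$) realizes it as arising from a reducible crystalline representation of the required Hodge--Tate type. Its reduction defines a class in $\LBDJ$ whose restriction to $H^1(G_{K_\infty},\chi)$ is represented by the given étale module. No information is lost in restricting, thanks to the injectivity statement of~\cite[Lem.~5.4.2]{MR3324938} together with the fact (using~\cite[Thm.~4.9]{MR3352531}) that the tres ramifi\'e class sits outside $\LBDJ$ in the case $\chi=\epsilonbar$.

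The final step, which I expect to be the main obstacle, is uniqueness of the $x_i$. An isomorphism between two Kisin modules of the displayed form would have to preserve the rank-$1$ sub and so be block upper-triangular; writing down the resulting matrix identities componentwise and tracking $u$-adic valuations in each embedding, one checks that any such isomorphism is forced to be the identity on the off-diagonal entries, so the $x_i$ are uniquely determined by $\cM$. A useful sanity check is the dimension count: the parameter space $\{(x_i)_{i\in J}\}$ (augmented by $x''_{i_0}$ in the case $\chi=1$) has dimension exactly $|J|$ (resp.\ $|J|+1$), matching $\dim_{\Fpbar}\LBDJ$ already recorded; any redundancy among the $x_i$ would contradict this match.
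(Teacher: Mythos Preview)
Your approach is essentially the paper's: both reduce the statement to the explicit Kisin-module classification of \cite{MR3164985} (extended to $p=2$ by \cite{Wangp2}). The paper's proof is terser, citing \cite[Thm.~7.9]{MR3164985} for the shape of the Kisin modules, \cite[Prop.~8.8]{MR3164985} together with the maximality of $J$ to identify the auxiliary subset $J'$ appearing in that classification with the given $J$, and the proof of \cite[Thm.~9.1]{MR3164985} for the uniqueness of the $x_i$ (your hands-on uniqueness argument via block upper-triangular automorphisms is effectively the same computation).

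One genuine point you have glossed over: in \cite[Thm.~7.9]{MR3164985} the Kisin module attached to a class in $\LBDJ$ is described in terms of a subset $J'$ determined intrinsically by the representation, not by the externally prescribed $J$. The maximality hypothesis on $J$, combined with \cite[Prop.~8.8]{MR3164985}, is precisely what forces $J'=J$; without this step you cannot assert that the diagonal entries are $u^{r_i-h_i}$ and $(a)_i u^{h_i}$ with $h_i$ defined from \emph{your} $J$. Your sentence ``the splitting $r_i=(r_i-h_i)+h_i$ is determined by the choice of $J$'' hides this, and it is the only nontrivial ingredient beyond straight citation. Once that identification is made, the rest of your outline (including the injectivity and peu ramifi\'e remarks, which the paper also invokes in the surrounding discussion) goes through.
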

\begin{proof}
  In the case~$p>2$, this is an immediate consequence of ~\cite[Thm.\
  7.9]{MR3164985} (which describes the corresponding Kisin modules,
  which are just lattices in~$\cM$; the set~$J'$ appearing there can
  be taken to be our~$J$ by~\cite[Prop.\
  8.8]{MR3164985} and our assumption that~$J$ is maximal) and the proof of~\cite[Thm.\
  9.1]{MR3164985} (which shows that the different~$x_i$ give rise to
  different Galois representations), while if~$p=2$, then the result
  follows from the results of~\cite{Wangp2}.
  \end{proof}As in Section~\ref{sec:notation}, we let~$\pi$ be a choice
of $(p^f-1)$st root of~$-p$.
Write~$M:=L(\pi)$, where~$L/K$ is an unramified extension of
degree prime to~$p$, chosen so that~$\chi|_{G_M}$ is
trivial. (In~\cite{2016arXiv160307708D} a slightly
more general choice of~$M$ is permitted, but it is shown there that
their constructions are independent of this choice, and this
choice is convenient for us.) Then~$M/K$ is an abelian extension of degree prime
to~$p$. Since~$(p^f-1)$ is prime to~$p$, for each~$n\ge 1$ there is a
unique $p^n$th root~$\pi^{1/p^n}$ of~$\pi$ such 
that~$(\pi^{1/p^n})^{(p^f -1)}=(-p)^{1/p^n}$, and we set~$M_n=M(\pi^{1/p^n})$, $M_\infty=\cup_nM_n$.

 If~$\cM$ is an \'etale $\varphi$-module with corresponding
$G_{K_\infty}$-representation~$T(\cM)$, then it is easy to check that the \'etale
$\varphi$-module corresponding to~$T(\cM)|_{G_{M_\infty}}$
is \[\cM_M:=l((u))\otimes_{k((u)),u\mapsto u^{p^{f}-1}}\cM.\]  

Applying this to one of the \'etale $\varphi$-modules arising in
the statement of Theorem~\ref{thm: form of Kisin modules},
it follows that (with the obvious choice of basis $e_i,f_i$ for $\cM_M$) the matrix of
$\varphi:\cM_{M,i-1}\to \cM_{M,i}$ is
\[
  \begin{pmatrix}
    u^{(r_i-h_i)(p^f-1)} & x_i\\ 0& (a)_iu^{h_i(p^f-1)}
  \end{pmatrix}
\]
where as above $h_i=r_i$ if $i\in J$ and $h_i=0$ if
$i\notin J$, and $x_i$ is zero if $i\notin
J$. 
Furthermore~$x_i\in\Fpbar$, except that if~$\chi=1$, we have fixed
a choice of~$i_0\in J$, and~$x_{i_0}$ is
allowed to be of the form $x_{i_0}'+x_{i_0}''u^{p(p^f-1)}$ with
$x'_{i_0},x''_{i_0}\in\Fpbar$. (Here the $\cM_{M,i}$
are periodic with period~$f[l:k]$, but of course the $r_i, h_i$ and $x_i$ depend
only on ~$i$ modulo~$f$.) 

We now make a change of basis, setting
$e_i'=u^{\alpha_i}e_i$ and $f'_i=a^{\lfloor i/f\rfloor
}u^{\beta_i}f_i$ (where $0\le i\le f[l:k]-1$), so that the matrix of
$\varphi:\cM_{M,i-1}\to \cM_{M,i}$ becomes \[
  \begin{pmatrix}
    u^{(r_i-h_i)(p^f-1)+p\alpha_{i-1}-\alpha_i} & a^{\lfloor i-1/f\rfloor}x_iu^{p\beta_{i-1}-\alpha_i}\\ 0& u^{h_i(p^f-1)+p\beta_{i-1}-\beta_i}
  \end{pmatrix}\]We  choose the $\alpha_i,\beta_i$ so that the
entries on the diagonal become trivial; concretely, this means that we
set \[\alpha_i=-\sum_{j=0}^{f-1}(r_{i+j+1}-h_{i+j+1})p^{f-1-j}\, ,
	\quad \beta_i=-\sum_{j=0}^{f-1}h_{i+j+1}p^{f-1-j} \, .\]
Write~$\xi_i:=\alpha_i-p\beta_{i-1}$,
so that we have
\[\xi_i = \sum_{j=0}^{f-1}(-1)^{i+j+1\notin J}r_{i+j+1}p^{f-1-j}+\delta_{i\in J}r_i(p^f-1)\]
where 
$\delta_{i\in J}=1$ if $i\in J$ and $0$ otherwise.

With the obvious
basis for~$\cM_M$ as an
$l((u))\otimes_{\Fp} \Fpbar$-module, 
~$\phi_{\cM_M}$ is given by the
matrix \[
  \begin{pmatrix}
    1& (x_ia^{-1}\lambda_{\sigma_i,\mu^{-1}} u^{-\xi_i})_{i = 0,\dots,f-1}\\0&1
  \end{pmatrix}
\]where~$\lambda_{\sigma_i,\mu^{-1}}$ is the element
of~$l\otimes_{k,\sigma_i}\Fpbar$ that we defined above. 
Then~$T(\cM_M)$ is an extension of the
trivial representation by itself, and thus corresponds to an
element of~$\Hom(G_{l((u))},\Fpbar)$. By the  definition of~$T$, the
kernel of this homomorphism corresponds to the Artin--Schreier
extension of~$l((u))$ determined by~$(x_i\lambda_{\sigma_i,\mu^{-1}} u^{-\xi_i})_{i = 0,\dots,f-1}$. We have
therefore proved the following result.

\begin{cor}
  \label{cor: description of LBDJ classes as artin schreier
    extensions} 
    The image of~$\LBDJ$
  in~$H^1(G_{M_\infty},\Fpbar)=\Hom(G_{l((u))},\Fpbar)$ is spanned 
  by the classes~$f_{\lambda_{\sigma_i,\mu^{-1}} u^{-\xi_i}}$ corresponding via Artin--Schreier theory to the
  elements
 $$\lambda_{\sigma_i,\mu^{-1}} u^{-\xi_i} \in l\otimes_{k,\sigma_i} \Fbar_p 
  \subseteq l\otimes_{\F_p} \Fbar_p,$$
  for~$i\in J$, together with the
  class~$f_{\lambda_{\sigma_{i_0},\mu^{-1}}u^{p(p^f-1)-\xi_{i_0}}}$ if~$\chi=1$.
  \end{cor}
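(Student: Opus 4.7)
The plan is to read the corollary off directly from the explicit computation that precedes its statement. Given any class in $\LBDJ$, Theorem~\ref{thm: form of Kisin modules} produces an étale $\varphi$-module $\cM$ over $k((u))$ realizing the class after restriction to $G_{K_\infty}$; that restriction is injective on $\LBDJ$ because the tres ramifié case has been excluded. The first step is to pass further to $G_{M_\infty}$ by base change along $u \mapsto u^{p^f - 1}$, obtaining the $\varphi$-module $\cM_M$ whose $\varphi$-matrix in the obvious basis is upper triangular with diagonal entries $u^{(r_i - h_i)(p^f - 1)}$ and $(a)_i u^{h_i(p^f - 1)}$ and off-diagonal entry $x_i$.

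Next I would carry out the diagonal change of basis $e'_i = u^{\alpha_i} e_i$, $f'_i = a^{\lfloor i/f \rfloor} u^{\beta_i} f_i$, choosing $\alpha_i, \beta_i$ so that the diagonal becomes $1$. A direct computation with the $\varphi$-semilinear structure forces $\alpha_i = -\sum_{j=0}^{f-1}(r_{i+j+1} - h_{i+j+1}) p^{f-1-j}$ and $\beta_i = -\sum_{j=0}^{f-1} h_{i+j+1} p^{f-1-j}$. Setting $\xi_i := \alpha_i - p\beta_{i-1}$, the off-diagonal entries become scalar multiples of $u^{-\xi_i}$, and the scalars assemble into the element $\sum_i x_i \lambda_{\sigma_i, \mu^{-1}} u^{-\xi_i}$ of $l((u)) \otimes_{\F_p} \Fpbar$ (the $\lambda_{\sigma_i, \mu^{-1}}$ appearing because one must track how the twist by $a^{\lfloor i/f\rfloor}$ interacts with the decomposition \eqref{eqn:product over embeddings}).

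With $\cM_M$ now in the standard shape of an extension of the trivial $\varphi$-module by itself, the last step is to apply the explicit description of the functor $T$: extensions of the trivial character by itself correspond, via the Artin--Schreier sequence recalled in Subsection~\ref{subsec: compatibility of pairings}, to homomorphisms $f_a \in \Hom(G_{l((u))}, \Fpbar)$, where $a$ is read off the off-diagonal entry. Letting the parameters $x_i$ (and additionally $x_{i_0}''$ in the case $\chi = 1$) range independently yields the asserted spanning set.

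I anticipate no genuine obstacle; the whole argument is bookkeeping organised around Theorem~\ref{thm: form of Kisin modules}. The one point that requires a moment's care is the case $\chi = 1$, where the component $x_{i_0}'' u^p$ of $x_{i_0}$, after base change along $u \mapsto u^{p^f-1}$ and the diagonal rescaling, contributes the additional generator $f_{\lambda_{\sigma_{i_0}, \mu^{-1}} u^{p(p^f - 1) - \xi_{i_0}}}$; that this extra class is linearly independent of the rest is guaranteed by the uniqueness clause of Theorem~\ref{thm: form of Kisin modules} combined with the dimension count $\dim_{\Fpbar} \LBDJ = |J|+1$ recorded just after it.
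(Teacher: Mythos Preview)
Your proposal is correct and follows essentially the same route as the paper: the corollary is stated immediately after the very computation you outline (base change along $u\mapsto u^{p^f-1}$, the diagonal rescaling by $u^{\alpha_i}$ and $a^{\lfloor i/f\rfloor}u^{\beta_i}$, and identification of the resulting upper-triangular unipotent $\varphi$-module with an Artin--Schreier class), and the paper simply says ``We have therefore proved the following result.'' The only cosmetic discrepancy is that the paper records the off-diagonal entry as $x_i a^{-1}\lambda_{\sigma_i,\mu^{-1}}u^{-\xi_i}$ rather than $x_i\lambda_{\sigma_i,\mu^{-1}}u^{-\xi_i}$, but the extra nonzero scalar $a^{-1}$ is irrelevant to the span.
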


As in~\cite[\S 3.2]{2016arXiv160307708D}, we
may write $\chi|_{I_K}=\omega_0^{n_0}$ for some unique~$n_0$ of the
form~$n_0=\sum_{j=1}^fa_jp^{f-j}$ with each~$a_j\in[1,p]$ and at least
one~$a_j\ne p$. We set \[n_i=\sum_{j=1}^fa_{i+j}p^{f-j}, \]  so we
have~$\chi|_{I_K}=\omega_i^{n_i}$, and for all $i,j$ we have \[p^{-i}n_i\equiv p^{-j}n_j\pmod{p^f-1}.\]

Note that we
have \begin{align*}\chi|_{I_K}&=\prod_{j\in
                                J}\omega_j^{r_j}\prod_{j\notin J}\omega_j^{-r_j}
                               \\ 
                                                                  &=\prod_{
                                            j=0}^{f-1}\omega_{i}^{-(-1)^{i+j+1\in
                                            J}r_{i+j+1}p^{f-1-j}} \\
                                             &=\omega_{i}^{\alpha_i-p\beta_{i-1}}
		      = \omega_i^{\xi_i},\end{align*}
		      so that in particular we have
\numequation
\label{eqn:congruence}
\xi_i\equiv n_i\pmod{p^f-1}.
\end{equation}

\subsection{The Artin--Hasse exponential and $\LDDR$}\label{subsec: Artin Hasse}
We now recall some of the definitions made
in~\cite[\S5.1]{2016arXiv160307708D}. In particular, for each~$i$ we
define an embedding~$\sigma_i'$ 
and an integer $n'_i$ as follows. If $a_{i-1}\ne p$, then we set
$\sigma'_i=\sigma_{i-1}$ and $n'_i=n_{i-1}$. If $a_{i-1}=p$,
then we let $j$ be the greatest integer less than~$i$ such
that~$a_{j-1}\ne p-1$, and we set $\sigma'_i=\sigma_{j-1}$
and~$n'_i=n_{j-1}-(p^f-1)$. Note that we always have $n'_i>0$.

We let $E(x)=\exp(\sum_{m\ge 0}x^{p^m}/p^m)\in\Zp[[x]]$ denote the
Artin--Hasse exponential. For any~$\alpha\in\m_M$, we define the
homomorphism
\[\epsilon_\alpha:l\otimes_{\Fp} \Fbar_p\to\cO_M^\times\otimes_{\Fp}\Fpbar\]
by $\epsilon_\alpha(a\otimes b):=E([a]\alpha)\otimes b$, 
where $[\cdot]:l\to W(l)$ is the Teichm\"uller lift. 
Then we
set \[u_i:=\epsilon_{\pi^{n'_i}}(\lambda_{\sigma'_i,\mu}) \in
  \cO_{M}^\times\otimes\Fpbar.\] In the case that~$\chi=1$, we also
set $u_{\triv}:=\pi\otimes 1\in M^\times\otimes\Fpbar$, and in the
case that~$\chi=\epsilonbar$, the mod~$p$ cyclotomic character, 
we set $u_{\cyc}:=\epsilon_{\pi^{p(p^f-1)/(p-1)}}(b\otimes 1)$, where
$b\in l$ is any element with $\operatorname{Tr}_{l/\Fp}(b)\ne 0$. It is shown
in~\cite[\S5]{2016arXiv160307708D} that the~$u_i$, together
with~$u_{\triv}$ if~$\chi=1$, and~$u_{\cyc}$ if~$\chi=\epsilonbar$,
are a basis of the $\Fpbar$-vector
space \[U_\chi:=\bigl(M^\times\otimes\Fpbar(\chi^{-1})\bigr)^{\Gal(M/K)}.\]

Via the Artin map~$\Art_M$, we may write \[H^1(G_K,\chi)\cong
  \Hom_{\Gal(M/K)}\bigl(M^\times,\Fpbar(\chi)\bigr)\]and thus identify
$H^1(G_K,\chi)$ with the $\Fpbar$-dual of~$U_\chi$. We then define a
basis of $H^1(G_K,\chi)$ by letting ~$c_i$, $c_{\triv}$ (if~$\chi=1$) and~$c_{\cyc}$
(if~$\chi=\epsilonbar$) denote the dual basis to that given by
the~$u_i$, $u_{\triv}$, $u_{\cyc}$.

Recall from~\cite[\S7.1]{2016arXiv160307708D} the definition
of 
the set~$\mu(J)$. It is defined as follows: $\mu(J)=J$, unless there
is some $i\notin J$ for which we have 
$a_{i-1}=p$,
$a_{i-2}=p-1,\dots,a_{i-s}=p-1,a_{i-s-1}\ne p-1$, and at least one of
$i-1,i-2,\dots,i-s$ is in~$J$. If this is the case, we let $x$ be
minimal such that $i-x\in J$, and we consider the set obtained
from~$J$ by replacing~$i-x$ with~$i$. Then~$\mu(J)$ is the set
obtained by simultaneously making all such replacements (that is,
making these replacements for all possible~$i$).

\begin{df}\label{defn: LDDR}
We define~$\LDDR$ to be the subspace of~$H^1(G_K,\chi)$ spanned
by the classes $c_i$ for~$i\in \mu(J)$, together with the
class~$c_{\triv}$ if~$\chi=1$, and the class~$c_{\cyc}$
if~$\chi=\epsilonbar$, $J=\{0,\dots,f-1\}$ and every~$r_i=p$.
\end{df}

\subsection{The comparison of $\LBDJ$ and $\LDDR$}\label{subsec: explicit
  reciprocity}In this section we will prove that the classes in~$\LBDJ$
are orthogonal to certain of the~$u_i$. 
We begin with a computation
that will allow us to
compare the constructions underlying the definition of $\LDDR$, which involve the
Artin--Hasse exponential,
with the field of norms constructions underlying the description of $\LBDJ$.

\begin{lem}
  \label{lem: norm compatible AH} For any~$n\ge 1$, $a\in l$, and $r\ge
  1$ with~$(r,p)=1$ we have $N_{K_n/K}E\bigl([a^{1/p^n}](\pi^{1/p^n})^r\bigr) = E([a]\pi^r)$.
\end{lem}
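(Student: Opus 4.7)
The plan is to reduce the lemma to a clean formal power series identity together with an elementary character sum, and then specialise. Let $\zeta$ range over the $p^n$-th roots of unity. Since $K_n/K$ is generated by the $p^n$-th root $\pi^{1/p^n}$ of the uniformiser $\pi$, the embeddings of $K_n$ into a fixed algebraic closure over $K$ send $\pi^{1/p^n}\mapsto\zeta\pi^{1/p^n}$. Writing the norm as a product over embeddings, I obtain
\[N_{K_n/K}E\bigl([a^{1/p^n}](\pi^{1/p^n})^r\bigr) \;=\; \prod_{\zeta^{p^n}=1} E\bigl([a^{1/p^n}]\zeta^r(\pi^{1/p^n})^r\bigr).\]
The hypothesis $(r,p)=1$ is used precisely here: the map $\zeta\mapsto\zeta^r$ is a bijection on the $p^n$-th roots of unity, so I can replace $\zeta^r$ by $\zeta$ throughout the product without changing its value.

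Next, I would establish the formal power series identity
\[\prod_{\zeta^{p^n}=1} E\bigl([b]\zeta X\bigr) \;=\; E\bigl([b^{p^n}] X^{p^n}\bigr) \quad\text{in } W(\Fpbar)[[X]],\]
valid for any $b\in\Fpbar$. Taking formal logarithms, and using the expansion $\log E(Y) = \sum_{m\ge 0} Y^{p^m}/p^m$, the left-hand side becomes
\[\sum_{m\ge 0}\frac{[b]^{p^m}X^{p^m}}{p^m}\sum_{\zeta^{p^n}=1}\zeta^{p^m}.\]
The inner character sum is $p^n$ when $p^n\mid p^m$ (equivalently $m\ge n$) and $0$ otherwise. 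Reindexing with $m'=m-n$ and using the identity $[b]^{p^m}=[b^{p^m}]=[b^{p^n}]^{p^{m'}}$ identifies the remaining series with $\log E([b^{p^n}]X^{p^n})$, as required.

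To conclude, I specialise the formal identity at $b=a^{1/p^n}$ and $X=(\pi^{1/p^n})^r$: then $b^{p^n}=a$ and $X^{p^n}=\pi^r$, so the right-hand side becomes exactly $E([a]\pi^r)$, while the left-hand side matches the product over embeddings above after the $\zeta\leftrightarrow\zeta^r$ swap. The only technical care required is justifying the passage from a formal identity of power series to an identity of actual elements, which amounts to checking convergence: since $\pi^{1/p^n}$ has positive valuation, all Artin--Hasse exponentials converge $p$-adically, and the product, sum, and log manipulations are legitimate in the usual sense. I do not expect a substantive obstacle beyond this bookkeeping; the substantive input is the orthogonality of $p^n$-th roots of unity, and the role of $(r,p)=1$ is to ensure the reparametrisation $\zeta\mapsto\zeta^r$ preserves the indexing set.
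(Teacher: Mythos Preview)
Your proof is correct and follows essentially the same approach as the paper: both express the norm as a product over $p^n$-th roots of unity, take logarithms via the definition of $E$, and reduce to the character sum $\sum_{\zeta^{p^n}=1}\zeta^{p^m}$. The only difference is presentational---you package the computation as a formal power series identity and make the role of the hypothesis $(r,p)=1$ explicit, whereas the paper absorbs the reparametrisation $\zeta\mapsto\zeta^r$ silently into its indexing of the Galois conjugates.
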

\begin{proof}
Let~$\zeta$ be a primitive~$p^n$th root of unity. Then
$$
\begin{aligned}
N_{K_n/K}E\bigl([a^{1/p^n}](\pi^{1/p^n})^r\bigr)
= & \ \prod_{k=0}^{p^n - 1}  E\bigl([a^{1/p^n}](\pi^{1/p^n})^r \zeta^k\bigr) \\
= & \ \prod_{k=0}^{p^n - 1}  \exp \left(\sum_{m \ge 0}
\frac{ [a^{1/p^n}]^{p^m} (\pi^{1/p^n})^{r p^m} \zeta^{k p^m}}{p^m} \right) \\
= & \ \exp\left(  \sum_{k=0}^{p^n - 1}  \sum_{m \ge 0}
\frac{ [a^{1/p^n}]^{p^m} (\pi^{1/p^n})^{r p^m} \zeta^{k p^m}}{p^m} \right) \\
= & \ \exp\left(   \sum_{m \ge 0} 
\frac{ [a^{1/p^n}]^{p^m} (\pi^{1/p^n})^{r p^m}}{p^m}  \sum_{k=0}^{p^n - 1}  \zeta^{k p^m}  \right).
 \end{aligned}
$$
Now the sum over roots of unity is~$0$ if~$\zeta^{p^m} \ne 1$ (equivalently, $m < n$)
and~$p^n$ if~$\zeta^{p^m} = 1$ (equivalently, $m \ge n$). Hence
\begin{align*}
N_{K_n/K}E\bigl([a^{1/p^n}](\pi^{1/p^n})^r\bigr)
= & \ \exp\left(  \sum_{m \ge n}   
\frac{ [a^{1/p^n}]^{p^m} (\pi^{1/p^n})^{r p^m} p^n}{p^m} \right) \\
= & \ \exp\left(  \sum_{m \ge 0}   
\frac{ [a^{1/p^n}]^{p^{n+m}} (\pi^{1/p^n})^{r p^{n+m}} p^n}{p^{m+n}} \right) \\
= & \ \exp\left(  \sum_{m \ge 0}  
\frac{ [a]^{p^{m}} (\pi^{r})^{p^{m}}}{p^{m}} \right) =  E([a] \pi^r). \qedhere
 \end{align*}\end{proof}

For each~$r\ge 1$ have a
homomorphism \[\epsilon_{u^r}:l\otimes\Fpbar\to
  l((u))^\times\otimes\Fp\] defined by $\epsilon_{u^r}(a\otimes
b)=E(au^r)\otimes b$. Then for each~$i$ we
set \[\tilde{u}_i:=\epsilon_{u^{n'_i}}(\lambda_{\sigma_i',\mu})\in l((u))^\times\otimes\Fp.\]

\begin{lemma} \label{lemma: tilting AH}
Let~$r\ge 1$ be coprime to~$p$. Then under the homomorphism~{\em (\ref{eqn:map on norms})} {\em (}with $M$ in place of $K${\em )},
the image of~$E([a]u^r)$ is equal to~$E([a] \pi^r)$; consequently, for
each~$i$, the
image of~$\tilde{u}_i$ is~$u_i$.
\end{lemma}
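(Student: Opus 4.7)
The plan is to express $E([a]u^r)$ as the class of a norm-compatible sequence in $\varprojlim_{N_{M_{n+1}/M_n}} M_n$; since the map~\emph{(\ref{eqn:map on norms})} with $M$ in place of $K$ is by construction induced by the projection onto the $n=0$ factor of $l((u))\cong \varprojlim M_n$, the image is then read off as the $n=0$ term of that sequence.

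The natural candidate is $v_n := E\bigl([a^{1/p^n}](\pi^{1/p^n})^r\bigr) \in \cO_{M_n}^\times$. To check norm-compatibility, I would apply the previous lemma at each level: its proof goes through verbatim with $M$ in place of $K$ (the only inputs being that $\pi$ is a uniformizer and that residue-field elements admit $p$-power roots), and applied with base field $M_n$, uniformizer $\pi^{1/p^n}$, residue-field element $a^{1/p^n} \in l$, and the lemma's ``$n$'' equal to $1$, it yields $N_{M_{n+1}/M_n}(v_{n+1}) = v_n$. Thus $(v_n)_n$ defines an element of $\varprojlim M_n^\times \cong l((u))^\times$, and its image under~\emph{(\ref{eqn:map on norms})} is $v_0 = E([a]\pi^r)$.

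It remains to identify $(v_n)_n$ with $E([a]u^r)$ in $l((u))^\times$. Under the field of norms iso, $u$ corresponds to $(\pi^{1/p^n})_n$ by construction, while $[a]$ (for $a \in l$) corresponds to the Teichm\"uller sequence $([a^{1/p^n}])_n$, which is norm-compatible because $N_{M_{n+1}/M_n}([a^{1/p^{n+1}}]) = [a^{1/p^{n+1}}]^p = [a^{1/p^n}]$ (using that $[a^{1/p^{n+1}}] \in W(l) \subset \cO_M$ is fixed by $\Gal(M_{n+1}/M_n)$). Multiplicativity gives $[a] u^r \leftrightarrow ([a^{1/p^n}](\pi^{1/p^n})^r)_n$, and applying the power series $E$ on both sides, using that the field of norms iso is a continuous ring isomorphism, gives $E([a]u^r) \leftrightarrow (v_n)_n$. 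The second statement follows by applying the first part with $r = n'_i$ and $a = \lambda_{\sigma_i', \mu}$ (extended by $\Fpbar$-linearity on the second tensor factor of $l \otimes \Fpbar$); here $(n'_i, p) = 1$ by direct inspection of the definition in \S\ref{subsec: Artin Hasse}.

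The main technical obstacle is the ``apply $E$ termwise'' step in paragraph three: because addition in $\varprojlim M_n$ is not pointwise on sequences, the passage $[a]u^r \leftrightarrow ([a^{1/p^n}](\pi^{1/p^n})^r)_n \Rightarrow E([a]u^r) \leftrightarrow (v_n)_n$ is not a formal consequence of multiplicativity alone; one must invoke that the field of norms identification is a ring-theoretic one (not merely multiplicative) and is continuous in the appropriate $u$-adic/$\pi^{1/p^n}$-adic topologies, together with the norm-compatibility established in paragraph two.
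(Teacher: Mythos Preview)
Your approach is the same as the paper's, which simply says the result is an immediate consequence of Lemma~\ref{lem: norm compatible AH} (the norm computation you apply level-wise) together with Lemma~\ref{lem: n'_i is a unit} (that $(n'_i,p)=1$, which you check by inspection). You have correctly isolated the one nontrivial point the paper elides: the projections $\varprojlim_N M_n \to M_n$ are multiplicative but not additive (indeed the source has characteristic $p$ and the target characteristic $0$), so ``apply $E$ termwise'' is not automatic; the ingredients you list are the right ones, and one clean way to make the step rigorous is to pass to the tilt $\varprojlim_{x\mapsto x^p}\cO_{\widehat{M_\infty}}/p$, where the ring operations are genuinely componentwise and the identification of $E(au^r)$ with $(v_n \bmod p)_n$ is then formal.
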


\begin{proof} This is an immediate consequence of
 Lemma~\ref{lem: norm compatible AH},
 taking into account Lemma~\ref{lem: n'_i is a unit} below, which shows 
 that~$n'_i$ is
 coprime to~$p$. 
\end{proof}

We now state and prove our main result,
which establishes~\cite[Conj.\ 7.2]{2016arXiv160307708D}, 
by reducing  the equality~$\LDDR=\LBDJ$ to a purely combinatorial
problem that is solved in Section~\ref{subsec: combinatorics}.

\begin{thm}\label{BDJ equals DDR}
  $\LBDJ=\LDDR$.
  \end{thm}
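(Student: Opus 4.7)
The plan is to prove $\LBDJ\subseteq\LDDR$ and then deduce equality by a dimension count: the construction of $\mu(J)$ merely relabels elements, so $|\mu(J)|=|J|$, and hence under the standing assumptions $\dim\LBDJ=\dim\LDDR$ (both equal to $|J|$, or to $|J|+1$ when $\chi=1$). Since $\{c_i\}$ (supplemented by $c_{\triv}$ when $\chi=1$, and $c_{\cyc}$ when $\chi=\epsilonbar$) is by definition the basis of $H^1(G_K,\chi)$ dual to $\{u_i\}$ (supplemented analogously), the subspace $\LDDR$ is precisely the annihilator, with respect to the pairing $H^1(G_K,\chi)\times U_\chi\to\Fpbar$, of the span of $\{u_j:j\notin\mu(J)\}$ together with $u_{\cyc}$ in the case $\chi=\epsilonbar$ (the excluded special case being exactly what would put $c_{\cyc}$ into $\LDDR$). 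The orthogonality of $\LBDJ$ to $u_{\cyc}$ in the cyclotomic case is automatic, since $\LBDJ$ is peu ramifi\'e by the cited theorem of Gee--Liu--Savitt while $c_{\cyc}$ spans the complementary tr\`es ramifi\'e direction. The substantive task is therefore to verify $\langle c,u_j\rangle=0$ for every $c\in\LBDJ$ and every $j\notin\mu(J)$.

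We transport the pairing to the field of norms. Combining Lemma~\ref{lem: class field theory field of norms} with Lemma~\ref{lemma: tilting AH} shows that the element $\sigma_{u_j}\in\Gal(M^{(p)}/M)$ corresponding via $\Art_M$ to $u_j$ is the image under restriction of the element $\sigma_{\tilde{u}_j}\in\Gal(M^{(p)}_\infty/M_\infty)$ corresponding to $\tilde{u}_j\in l((u))^\times\otimes\Fp$. Lemma~\ref{lem: pairing compatibility} then yields
\[\langle c,u_j\rangle=\langle c|_{G_{M_\infty}},\sigma_{\tilde{u}_j}\rangle.\]
Corollary~\ref{cor: description of LBDJ classes as artin schreier extensions} identifies $c|_{G_{M_\infty}}$ with an Artin--Schreier class $f_a$, where $a=\lambda_{\sigma_i,\mu^{-1}}u^{-\xi_i}$ for some $i\in J$ (and when $\chi=1$ one also allows the exceptional exponent $a=\lambda_{\sigma_{i_0},\mu^{-1}}u^{p(p^f-1)-\xi_{i_0}}$). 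Schmid's reciprocity law (Theorem~\ref{thm: schmid reciprocity}) converts the pairing into a residue,
\[\langle f_a,\sigma_{\tilde{u}_j}\rangle=\Tr_{l\otimes\Fpbar/\Fpbar}\Res\left(a\cdot\frac{d\tilde{u}_j}{\tilde{u}_j}\right).\]

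To evaluate this residue we differentiate $\tilde{u}_j=\epsilon_{u^{n'_j}}(\lambda_{\sigma'_j,\mu})$ using the formal identity $d\log E(x)=\sum_{m\ge 0}x^{p^m-1}\,dx$, which gives, modulo $p$,
\[\frac{d\tilde{u}_j}{\tilde{u}_j}=n'_j\sum_{m\ge 0}\lambda_{\sigma'_j,\mu}^{p^m}\,u^{n'_jp^m-1}\,du.\]
Multiplying by $a$ and extracting the coefficient of $u^{-1}$ collapses the sum to those $m\ge 0$ for which $n'_jp^m=\xi_i$ (and analogously for the $\chi=1$ exceptional class), after which one applies the trace. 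This reduces $\LBDJ\subseteq\LDDR$ to the purely combinatorial assertion that, for every $j\notin\mu(J)$ and every $i\in J$, either the equation $n'_j p^m=\xi_i$ has no nonnegative integer solution, or the resulting traced residue vanishes. The analysis of this problem, rooted in the base-$p$ digit expansions underlying $\xi_i$ and $n'_j$ and the interaction between the maximality of $J$ and the replacement rule defining $\mu(J)$, is the main obstacle and is handled in the following subsection.
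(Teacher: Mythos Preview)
Your proposal is correct and follows essentially the same approach as the paper: both reduce to the inclusion $\LBDJ\subseteq\LDDR$ via a dimension count, reinterpret $\LDDR$ as the annihilator of the complementary $u_j$'s, handle the cyclotomic case by the peu ramifi\'e property, transport the pairing to $l((u))$ via Lemmas~\ref{lem: class field theory field of norms}, \ref{lem: pairing compatibility}, \ref{lemma: tilting AH} and Schmid's reciprocity law, and then compute the logarithmic derivative of the Artin--Hasse exponential to reduce everything to the combinatorics of \S\ref{subsec: combinatorics}. The only cosmetic difference is that the paper explicitly extracts from the trace the additional condition $\sigma'_j=\sigma_{i-m}$ (coming from the support of $\lambda_{\sigma'_j,\mu}^{p^m}\cdot\lambda_{\sigma_i,\mu^{-1}}$) before invoking Proposition~\ref{prop: when can pairing be nonzero}, and separately records the $\chi=1$ exceptional exponent as Lemma~\ref{lem: extra pairing lemma trivial}, whereas you fold both into the deferred combinatorial analysis.
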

\begin{proof}
  Since we
  have~$\dim_{\Fpbar}\LBDJ=\dim_{\Fpbar}\LDDR=|J|+\delta_{\chi=1}$,
  it is enough to prove that~$\LBDJ\subseteq\LDDR$. By the definition
  of~$\LDDR$, it is equivalent to prove 
  that the image of every class in~$\LBDJ$ in~$H^1(G_M,\Fpbar)$ is
  orthogonal under the pairing of Section~\ref{subsec: compatibility of pairings} to the
  elements~$u_j\in U_\chi$, $j\notin \mu(J)$.

(In the case
  that~$\chi=\epsilonbar$, we also need to show that the classes
  are orthogonal to~$u_{\cyc}$; to see this, note that, as explained
  in~\cite[\S6.4]{2016arXiv160307708D} the classes~$c_i$ (together
  with~$c_{\triv}$ if~$p=2$) span the space of classes which are
  (equivalently) flatly or typically ramified in the sense
  of~\cite[\S3.3]{2016arXiv160307708D}, which are exactly the peu
  ramifi\'e classes; in other words, the classes orthogonal
  to~$u_{\cyc}$ are exactly the peu ramifi\'e classes. As we recalled
  in Section~\ref{subsec:
  etale phi modules from GLS}, it follows
  from~\cite[Thm.\ 4.9]{MR3352531} that every class in~$\LBDJ$ is peu ramifi\'e.)

  Combining Lemma~\ref{lem: class field theory field of norms},
  Lemma~\ref{lem: pairing compatibility}, Theorem~\ref{thm: schmid
    reciprocity}, Lemma~\ref{lemma: tilting AH}, and Corollary~\ref{cor: description of LBDJ classes
    as artin schreier extensions}, we see that we must show that for
  all $i\in J$, $j\notin \mu(J)$, the residue 
  \numequation\label{eqn: residue pairing to compute} \Tr_{l\otimes_{\Fp}\Fpbar/\Fpbar}\Res \left( \dlog(\tilde{u}_j)
    \cdot \lambda_{\sigma_i,\mu^{-1}}  u^{-\xi_i}
  \right).\end{equation}vanishes. (If~$\chi=1$ then we must also show
that the pairing with~$\lambda_{\sigma_{i_0},\mu^{-1}}  u^{p(p^f-1)-\xi_{i_0}}$ vanishes.)

  Since
  $$\dlog E(X) = \left(X + X^p + X^{p^2} + \ldots \right) \dlog X$$
  and~$\dlog(\lambda u^{n}) = n \cdot u^{-1}$, 
  the pairing~(\ref{eqn: residue pairing to compute}) evaluates to
  $$ \Tr_{l\otimes_{\Fp}\Fpbar/\Fpbar}\Res \left( \sum_{m \ge 0} n'_j
    (\varphi\otimes 1)^m(\lambda_{\sigma'_j,\mu}) u^{n'_j p^m - 1}
    \cdot \lambda_{\sigma_i,\mu^{-1}} u^{-\xi_i}
  \right).$$(Here~$\varphi\otimes 1:l\otimes\Fpbar\to l\otimes\Fpbar$
  is the $p$-th power map on~$l$.) 

This residue is given by the coefficient of~$u^{-1}$, 
so we see that this pairing can be non-zero 
only when~$\xi_i = p^m n'_j$ for some~$m\ge 0$. (If~$\chi=1$ then we
must also consider the possibility that $\xi_i-p(p^f-1) =
p^m n'_j$, but this is excluded by Lemma~\ref{lem: extra
  pairing lemma trivial} below.) If this holds, then the pairing
evaluates to \[n'_j\Tr_{l\otimes_{\Fp}\Fpbar/\Fpbar}
    (\varphi\otimes 1)^m(\lambda_{\sigma'_j,\mu}) 
    \cdot \lambda_{\sigma_i,\mu^{-1}}.\]  Now, we have \[
  (\varphi\otimes 1)^m(\lambda_{\sigma'_j,\mu})\cdot
  \lambda_{\sigma_i,\mu^{-1}} = (\varphi\otimes
  1)^m(\lambda_{\sigma'_j,\mu}\lambda_{\sigma_{i-m},\mu^{-1}})\] which
is nonzero 
if and only if ~$\sigma'_j = \sigma_{i - m}$, in which case its trace
to~$\Fpbar$ is equal to~$[l:k]$.

In conclusion, we have seen that in order
for the pairing to be non-zero,
we require 
  \begin{itemize}
  \item $\sigma'_j=\sigma_{i-m}$, and
    \item $\xi_i=p^m n'_j$.
  \end{itemize} (In fact, although we don't need this stronger
  statement,
  we observe that the pairing is non-zero if and only if
  these conditions hold,
  because~$n'_j$ is always a unit by
  Lemma~\ref{lem: n'_i is a unit}, while~$[l:k]$ is prime to~$p$.) 
  By Proposition~\ref{prop: when can pairing be nonzero} below, these
  conditions 
  imply that~$j\in\mu(J)$, as required.
\end{proof}
\begin{rem}
  \label{rem: comparison of bases}It is clear that the method of proof of
  Theorem~\ref{BDJ equals DDR} could be used to compare the bases 
  of~$\LBDJ$ and~$\LDDR$ that we have been working with.  
  We have checked that
  in suitably generic cases the bases are the same
  (up to scalars), but that in exceptional cases they may differ.
  \end{rem}
\subsection{Combinatorics}\label{subsec: combinatorics}

Our main aim in this section is to prove Proposition~\ref{prop: when
  can pairing be nonzero}, which was used in the proof of
Theorem~\ref{BDJ equals DDR}. We begin with some simple observations;
the following three lemmas give us some control on the
quantities~$\xi_i$ and~$n'_i$ which will be important in the proof
of Proposition~\ref{prop: when can pairing be nonzero}. 
\begin{lem}\label{lem: n'_i is a unit}
$n'_i$ is not divisible by~$p$.
\end{lem}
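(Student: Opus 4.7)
The plan is to unpack the definition of $n'_i$ in the two cases distinguished by whether $a_{i-1}=p$, and in each case compute $n'_i\pmod p$ directly from the base-$p$-style expansion $n_k=\sum_{j=1}^f a_{k+j}p^{f-j}$. The key observation is that in this expansion, every term is divisible by $p$ except the one with $j=f$, so that $n_k\equiv a_{k+f}=a_k\pmod p$ (using the periodicity $a_{j+f}=a_j$).

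In Case 1, where $a_{i-1}\ne p$, we have $n'_i=n_{i-1}$, and the above gives $n'_i\equiv a_{i-1}\pmod p$. Since $a_{i-1}\in[1,p]\setminus\{p\}=[1,p-1]$, this is a unit mod $p$.

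In Case 2, where $a_{i-1}=p$, we have $n'_i=n_{j-1}-(p^f-1)$ for a $j<i$ with $a_{j-1}\ne p-1$ (which one checks exists by the hypothesis that not every $a_k$ equals $p$, using cyclic periodicity if necessary). Then $n_{j-1}\equiv a_{j-1}\pmod p$ and $p^f-1\equiv -1\pmod p$, so $n'_i\equiv a_{j-1}+1\pmod p$. Since $a_{j-1}\in[1,p]\setminus\{p-1\}$, the value $a_{j-1}+1$ lies in $\{2,\dots,p-1\}\cup\{p+1\}$, none of which is divisible by $p$.

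The proof is entirely a bookkeeping exercise: there is no real obstacle, only the mild subtlety of making sure that the witness $j$ in Case 2 exists (handled by the standing assumption that at least one $a_j\ne p$, together with cyclic interpretation of indices).
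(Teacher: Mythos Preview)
Your argument is correct and is essentially identical to the paper's proof: both split into the cases $a_{i-1}\ne p$ and $a_{i-1}=p$, reduce $n_k$ modulo $p$ to $a_k$, and in the second case conclude from $a_{j-1}\ne p-1$ that $a_{j-1}+1\not\equiv 0\pmod p$. One tiny remark: your justification for the existence of $j$ in Case~2 is slightly off-target --- the relevant fact is not that some $a_k\ne p$, but simply that $a_{i-1}=p\ne p-1$, so by periodicity $j=i-f$ already works; this does not affect the validity of the proof.
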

\begin{proof}
  This is automatic if~$a_{i-1} \ne p$ because
  then~$n'_i =n_{i-1}
   \equiv a_{i-1} \pmod p$.  Assume
  that~$a_{i-1} = p$, and write
  that~$(a_{i-1},a_{i-2},\ldots,a_{j}) = (p,p-1,\ldots,p-1)$, with~$a_{j-1} \ne p-1$.  Now
$$n'_{i} := n_{j-1}-(p^f-1)\equiv n_{j - 1} + 1\equiv a_{j - 1} + 1 \pmod p.$$
However, since~$a_{j-1} \ne p-1$ and lies in~$[1,p]$, we
have~$a_{j-1} \not\equiv -1 \mod p$, and
so~$n'_i \not\equiv 0 \pmod p $.
\end{proof}

\begin{lem}\label{lem: bounds on xi}If $i\in J$ then $0<\xi_i< p^2(p^f-1)/(p-1)$.
\end{lem}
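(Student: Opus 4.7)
My plan is to handle the two inequalities separately, after rewriting~$\xi_i$ for $i\in J$ in the cleaner form
\[ \xi_i \;=\; r_i p^f \;+\; \sum_{j=1}^{f-1} (-1)^{i+j \notin J}\, r_{i+j}\, p^{f-j}, \]
which one obtains from the definition by using $i+f\equiv i$ together with $i\in J$ (so that $(-1)^{i\notin J}=+1$) to absorb the term $r_i(p^f-1)$ into the~$j=f-1$ summand. The upper bound then drops out from termwise absolute-value estimates: $\xi_i\leq p\cdot p^f + \sum_{j=1}^{f-1} p\cdot p^{f-j} = p^2(p^f-1)/(p-1)$, and the equality case would force every sign~$(-1)^{i+j\notin J}$ to be $+1$ and every $r_k$ to equal~$p$, i.e., $J=\{0,\dots,f-1\}$ with every~$r_k=p$. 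This is precisely the case excluded by the assumption imposed right after Lemma~\ref{lem: cyclotomic J=S case of main thm}, so the inequality is strict.

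For the lower bound~$\xi_i>0$, the maximality hypothesis on~$J$ is essential. First I would dispose of the easy sub-case~$r_i\geq 2$: the leading term~$r_ip^f\geq 2p^f$ already exceeds the total absolute contribution~$(p^{f+1}-p^2)/(p-1)$ of the remaining terms (a short check valid for all~$p\geq 2$ and $f\geq 1$). So I may assume~$r_i=1$. The idea in this case is to track the partial sums
\[ P_0 := 1, \qquad P_{k+1} := p P_k + (-1)^{i+k+1\notin J}\, r_{i+k+1}, \]
so that $\xi_i = p\,P_{f-1}$, and analyze the recursion carefully.

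Two observations drive the argument: (i) once $P_k\geq 2$, every subsequent $P_\ell$ satisfies~$P_\ell\geq p\geq 2$ (since $P_{\ell+1}\geq 2p-p = p$), so $\xi_i>0$; and (ii) starting from $P_0=1$, the only way for $P_{k+1}$ to avoid lying in $\{2,3,\dots\}$ is to have $(-1)^{i+k+1\notin J}=-1$ with $r_{i+k+1}\in\{p-1,p\}$, giving $P_{k+1}=1$ or $0$ respectively. Under the assumption $\xi_i\leq 0$, the only viable trajectory is therefore $P_0=\cdots=P_{m-1}=1,\, P_m=0$ for some $m\in\{1,\dots,f-1\}$, and unwinding the recursion produces $r_{i+1}=\cdots=r_{i+m-1}=p-1$, $r_{i+m}=p$, together with $i+1,\dots,i+m\notin J$. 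Combined with $r_i=1$ and $i\in J$, this exactly exhibits the forbidden pattern $(1,p-1,\dots,p-1,p)$ from the first clause of the maximality of~$J$ (read cyclically in the indices), giving the desired contradiction. The main obstacle I anticipate is the bookkeeping in the recursive case analysis of (ii): one must confirm that no alternate trajectory---in particular, none that dips below zero and then rises again---is compatible with $P_{f-1}\leq 0$ without the forbidden pattern already appearing, and the stability statement~(i) is precisely what rules this out.
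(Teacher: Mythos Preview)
Your proof is correct. The upper bound and the easy sub-case $r_i\ge 2$ are handled exactly as in the paper. For the harder sub-case $r_i=1$, however, your route genuinely differs from the paper's: the paper locates the next index $i+x+1\in J$ after~$i$, uses maximality to constrain the shape of $(r_{i+1},\dots,r_{i+x})$, and then splits into two explicit estimate computations (with a further separate treatment of $J=\{i\}$). Your Horner-style recursion $P_{k+1}=pP_k+(-1)^{i+k+1\notin J}r_{i+k+1}$ replaces all of this by a single dynamical observation: once $P_k\ge 2$ the sequence stays $\ge p$, so assuming $\xi_i\le 0$ forces the trajectory $1,1,\dots,1,0$, which is precisely the forbidden maximal-$J$ pattern. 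This is more streamlined---it avoids the case splits and the separate $J=\{i\}$ argument---while the paper's version has the minor advantage of producing explicit lower bounds for~$\xi_i$ (which are not used elsewhere). One small point worth making explicit in your write-up: the constant trajectory $P_0=\cdots=P_{f-1}=1$ is excluded because it gives $\xi_i=p>0$, so the index~$m$ with $P_m=0$ really does exist.
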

\begin{proof}
  Since $i \in J,$ we have
  \numequation
  \label{eqn:xi formula}
  \xi_i=p^f r_i +(-1)^{i+1\notin J}p^{f-1}r_{i+1}+(-1)^{i+2\notin
      J}p^{f-2}r_{i+2}+\dots+(-1)^{i-1\notin J}pr_{i-1}.
\end{equation}
  The upper
  bound is immediate, as we have $r_j\le p$ for all~$j$ (and in the
  case that all~$r_j$ are equal to~$p$, we are not allowing~$J^c$ to
  be empty). For the lower
  bound, if $r_i\ge 2$ then $\xi_i\ge 2p^f-(p^f+p^{f-1}+\dots+p^2)>0$,
  so we may assume that $r_i=1$. Suppose that $J\ne\{i\}$, and let
  $x\ge 0$
  be minimal so that $i+x+1\in J$. Since $r_i=1$ and~$i\in J$, it follows from the
  maximality condition on~$J$
  that no initial segment of~$(r_{i+1}, \ldots, r_{i+x})$
  can be~$(p-1,p-1,\ldots,p)$  (which also
  excludes the degenerate case consisting of a single initial~$p$). 
  Hence either all the~$r_{j}$ for~$j \in [i+1,i+x]$ are at most~$p-1$, in which case
  \[p^{f-1}r_{i+1}+\dots+p^{f-x}r_{i+x}\le
    (p^{f-1}+\dots+p^{f-x})(p-1)=p^f-p^{f-x},\] so that
  \[\xi_i\ge
  p^{f-x}+p^{f-x-1}-(p^{f-x-2}+\dots+p)p=p^{f-x}-p^{f-x-2}-\dots-p^2>0,\]
  or for some~$y < x$ we have~$r_{i+1}, \ldots, r_{i + y} = p-1$ and~$r_{i+y+1} < p-1$,
  in which case
$$\begin{aligned} p^{f-1}r_{i+1}+\dots+p^{f-x}r_{i+x}\le & \ 
    (p^{f-1}+\dots+p^{f-y})(p-1) \\ & \quad + (p-2) p^{f-y-1} + 
    p(p^{f-y-2} + \ldots p^{f-x}) \\
    = & \ (p^{f-1}+\dots+p^{f-x})(p-1) \\ & \quad - p^{f-y-1} + p^{f-y-2} + \ldots + p^{f-x} \\
    \le & \ (p^{f-1}+\dots+p^{f-x})(p-1) \\
   = & \ p^f-p^{f-x}, \end{aligned}$$  
     and one proceeds as above.
Finally, if~$J=\{i\}$, then arguing as above (and again using the
maximality condition on~$J$) we see (considering the two cases as above)
 that $\xi_i\ge p^f-(p^{f-1}+\dots+p)(p-1)=p>0$.
\end{proof}
\begin{lem}\label{lem: bounds on n_i}
	For any value of $i$, we have
  $(p^f-1)/(p-1)\le n_i< (p^f-1)+(p^f-1)/(p-1)$.
\end{lem}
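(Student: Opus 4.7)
The plan is to prove both bounds directly from the definition $n_i = \sum_{j=1}^{f} a_{i+j}\, p^{f-j}$, where the $a_j$ are extended to all integers by periodicity modulo $f$ and each lies in the interval $[1,p]$, with the additional global constraint that at least one $a_j$ (equivalently, at least one $a_j$ in a single period) is strictly less than $p$.

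For the lower bound, the trivial estimate $a_{i+j} \geq 1$ gives
\[
n_i \;\geq\; \sum_{j=1}^{f} p^{f-j} \;=\; \frac{p^f-1}{p-1},
\]
as desired. For the upper bound, I first note the identity
\[
\frac{p^{f+1}-p}{p-1} \;=\; p \cdot \frac{p^f-1}{p-1} \;=\; (p^f - 1) + \frac{p^f-1}{p-1},
\]
so it suffices to show $n_i < p(p^f-1)/(p-1)$. The bound $a_{i+j}\leq p$ gives the non-strict inequality; to obtain strictness I use that as $j$ ranges over $\{1,\dots,f\}$, the index $i+j$ ranges over a complete set of residues mod $f$, so by periodicity the multiset $\{a_{i+1},\dots,a_{i+f}\}$ equals $\{a_1,\dots,a_f\}$. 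Since by hypothesis at least one $a_j$ is less than $p$, at least one term in the sum defining $n_i$ is strictly smaller than $p \cdot p^{f-j}$, yielding the strict upper bound.

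There is no serious obstacle here: the argument is a one-line estimate for each inequality. The only subtlety is remembering to invoke the hypothesis that not all $a_j$ equal $p$ in order to get the strict inequality in the upper bound.
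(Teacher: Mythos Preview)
Your proof is correct and is essentially the same as the paper's, which simply states that the bounds are immediate from the definition of~$n_i$. You have just spelled out the details of that one-line observation, including the point that the strict upper bound uses the hypothesis that not all~$a_j$ equal~$p$.
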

\begin{proof}
  This is immediate from the definition of~$n_i$.
\end{proof}
Let~$v_p(\xi_i)$ denote the $p$-adic valuation of~$\xi_i$.
The following lemma shows that~$\xi_i$
is in some sense a function of this valuation, and is crucial for our main argument.             

\begin{lem}\label{lem: xi_i in terms of n_l}
 If~$i\in J$, and if~$m:= v_p(\xi_i)$,
 then~$m \ge 1$.
   If furthermore $m>1$, then we have
 $\xi_i=p^m\bigl(n_{i-m}-(p^f-1)\bigr)$,
  while if $m=1$, then either $\xi_i=pn_{i-1}$ or
  $\xi_i=p\bigl(n_{i-1}-(p^f-1)\bigr)$, depending on whether or not
  $\xi_i/p\ge(p^f-1)/(p-1)$.
\end{lem}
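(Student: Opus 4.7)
The plan is to combine the congruence $\xi_i\equiv n_i\pmod{p^f-1}$ (equation~(\ref{eqn:congruence})) with the size bounds from Lemmas~\ref{lem: bounds on xi} and~\ref{lem: bounds on n_i} in order to pin down $\xi_i/p^m$ uniquely within its residue class modulo $p^f-1$.

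First I would dispose of the statement $m\ge 1$. This is immediate from the explicit formula~(\ref{eqn:xi formula}): when $i\in J$, the leading term is $p^f r_i$, and every remaining term carries an explicit factor of $p$, so $p\mid\xi_i$.

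The key arithmetic input is the identity $p\, n_{i-1}\equiv n_i\pmod{p^f-1}$. This follows directly from $n_i=\sum_{j=1}^{f}a_{i+j}p^{f-j}$ together with the periodicity $a_{i+f}=a_i$; indeed a one-line calculation gives $p\, n_{i-1}-n_i=a_i(p^f-1)$. Iterating, $\xi_i\equiv n_i\equiv p^{m}n_{i-m}\pmod{p^f-1}$, so setting $X:=\xi_i/p^{m}$ one has $X\equiv n_{i-m}\pmod{p^f-1}$, with $X$ an integer coprime to $p$.

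Now apply the bounds. Lemma~\ref{lem: bounds on xi} gives $0<X<p^{2-m}(p^f-1)/(p-1)$, while Lemma~\ref{lem: bounds on n_i} places $n_{i-m}$ in the interval $[(p^f-1)/(p-1),\,(p^f-1)/(p-1)+(p^f-1))$. For $m\ge 2$ one has $X<(p^f-1)/(p-1)\le n_{i-m}$, so $n_{i-m}-X$ is a positive multiple of $p^f-1$ bounded above by $(p^f-1)/(p-1)+(p^f-1)\le 2(p^f-1)$, forcing $n_{i-m}-X=p^f-1$ and hence $\xi_i=p^m\bigl(n_{i-m}-(p^f-1)\bigr)$. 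For $m=1$, if $X\ge (p^f-1)/(p-1)$ then $X-n_{i-1}$ is a non-negative multiple of $p^f-1$ strictly less than $pA-A=p^f-1$ (using $X<p(p^f-1)/(p-1)$), so $X=n_{i-1}$; if instead $X<(p^f-1)/(p-1)$ then $n_{i-1}-X$ is a positive multiple of $p^f-1$ strictly less than $(p^f-1)/(p-1)+(p^f-1)\le 2(p^f-1)$, hence equal to $p^f-1$, giving $X=n_{i-1}-(p^f-1)$.

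There is no real obstacle here; the only slightly delicate point is checking that the upper bound $X<p^{2-m}(p^f-1)/(p-1)$ is tight enough to exclude the residue class representative $n_{i-m}+(p^f-1)$ (which is what dictates the choice of the constant $p^2/(p-1)$ in Lemma~\ref{lem: bounds on xi}), and checking that the inequality $(p^f-1)/(p-1)+(p^f-1)\le 2(p^f-1)$ holds even at $p=2$ as a consequence of $n_{i-m}<(p^f-1)/(p-1)+(p^f-1)$ being strict. Everything else is bookkeeping.
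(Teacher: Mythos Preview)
Your argument is correct and essentially identical to the paper's: combine the congruence $\xi_i/p^m\equiv n_{i-m}\pmod{p^f-1}$ (which the paper reads off directly from~(\ref{eqn:congruence}) together with the already-recorded fact $p^{-i}n_i\equiv p^{-j}n_j\pmod{p^f-1}$) with the size bounds of Lemmas~\ref{lem: bounds on xi} and~\ref{lem: bounds on n_i} to pin down $\xi_i/p^m$ in its residue class. One small slip: in the $m=1$ case with $X\ge(p^f-1)/(p-1)$ you assert that $X-n_{i-1}$ is a \emph{non-negative} multiple of $p^f-1$ without justification; the clean fix is to observe that both $X$ and $n_{i-1}$ lie in the half-open interval $\bigl[(p^f-1)/(p-1),\,(p^f-1)/(p-1)+(p^f-1)\bigr)$ of length exactly $p^f-1$, so the congruence already forces $X=n_{i-1}$.
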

\begin{proof}
Equation~(\ref{eqn:xi formula}) shows  that~$m$ is at least~$1$ if $i \in J$. 
From (\ref{eqn:congruence}), we deduce
 that $\xi_i/p^m\equiv
 n_{i-m}\pmod{p^f-1}$. By Lemma~\ref{lem: bounds on xi} we
 have \[0<\xi_i/p^m< p^{2-m}(p^f-1)/(p-1),\] so that if $m\ge 2$ it
 follows by  Lemma~\ref{lem: bounds on n_i} that \[\xi_i/p^m <
   (p^f-1)/(p-1)\le n_{i-m}< (p^f-1)+(p^f-1)/(p-1).\]
   Since~$\xi_i > 0$ by Lemma~\ref{lem: bounds on xi}, the congruence
   modulo~$p^f - 1$ forces the equality
 $n_{i-m}-\xi_i/p^m=(p^f-1)$.
  If  $m=1$, then we have \[0<\xi_i/p <
   (p^f-1)+(p^f-1)/(p-1)\]and the claim follows in the same way.
\end{proof}
The following simple lemma was used in the proof of Theorem~\ref{BDJ
  equals DDR} in the case~$\chi=1$.
\begin{lem}
  \label{lem: extra pairing lemma trivial}Suppose that~$\chi=1$ and that
  $i\in J$. Then there are no solutions to the equation
  \begin{itemize}
  \item $\xi_i-p(p^f-1)=p^m (p^f-1)$.
  \end{itemize}for any~$m\ge 0$.
\end{lem}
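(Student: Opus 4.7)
The plan is to use the hypothesis $\chi = 1$ to pin down the value of $\xi_i$ completely, after which the non-existence of solutions becomes immediate.

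First I would observe that $\chi = 1$ means the restriction $\chi|_{I_K} = \omega_0^{n_0}$ is trivial, so $n_0$ must be a positive multiple of $p^f - 1$. Combined with the bounds $(p^f-1)/(p-1) \le n_0 < (p^f-1) + (p^f-1)/(p-1)$ from Lemma~\ref{lem: bounds on n_i}, this forces $n_0 = p^f - 1$; the same reasoning applied to each index gives $n_j = p^f - 1$ for every~$j$.

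Next I would apply Lemma~\ref{lem: xi_i in terms of n_l} with the given $i \in J$. Setting $m' := v_p(\xi_i)$, the lemma guarantees $m' \ge 1$. If $m' \ge 2$, the formula $\xi_i = p^{m'}\bigl(n_{i-m'} - (p^f-1)\bigr)$ together with $n_{i-m'} = p^f - 1$ would force $\xi_i = 0$, contradicting the strict positivity $\xi_i > 0$ coming from Lemma~\ref{lem: bounds on xi}. Hence $m' = 1$, in which case the two alternatives offered by Lemma~\ref{lem: xi_i in terms of n_l} are $\xi_i = p n_{i-1}$ or $\xi_i = p\bigl(n_{i-1} - (p^f-1)\bigr)$; the second again evaluates to~$0$ and is ruled out, while the first yields $\xi_i = p(p^f-1)$.

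Finally, substituting this into the equation $\xi_i - p(p^f-1) = p^m(p^f-1)$ gives $0 = p^m(p^f-1)$, which has no solution for $m \ge 0$ since the right-hand side is strictly positive. There is no genuine obstacle to overcome here: once one recognizes that $\chi = 1$ pins down $n_j = p^f - 1$ uniformly, the statement is just an unwinding of the preceding structural lemmas.
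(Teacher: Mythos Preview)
Your argument is correct and follows the same route as the paper: deduce $n_j=p^f-1$ for all~$j$ from $\chi=1$, then invoke Lemma~\ref{lem: xi_i in terms of n_l} together with the positivity of~$\xi_i$ from Lemma~\ref{lem: bounds on xi} to force $\xi_i=p(p^f-1)$, whence the equation becomes $0=p^m(p^f-1)$. The only difference is cosmetic: you spell out why $n_j=p^f-1$ via Lemma~\ref{lem: bounds on n_i}, whereas the paper simply asserts it.
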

\begin{proof} 
  Since~$\chi=1$, we have $n_j=p^f-1$ for all~$j$.
  From
  Lemma~\ref{lem: xi_i in terms of n_l}, we find that
  either~$v_p(\xi_i) \ge 2$, in which case~$\xi_i = 0$
  (contradicting Lemma~\ref{lem: bounds on xi}), or~$v_p(\xi_i) = 1$,
  in which case either~$\xi_i = 0$ or~$\xi_i = p(p^f - 1)$.
  The first case again contradicts Lemma~\ref{lem: bounds on xi}. The 
  second case leads to the equation  $0=p^m(p^f-1)$, which has
  no solutions, as required.
  \end{proof}

We now prove our main combinatorial result.
\begin{prop}
  \label{prop: when can pairing be nonzero}Suppose that~$i\in J$, and
  that for some integers $j,m$ we have
  \begin{itemize}
  \item $\sigma'_j=\sigma_{i-m}$, and
   \item $\xi_i=p^m n'_j$.
  \end{itemize}

   Then~$j\in\mu(J)$.
\end{prop}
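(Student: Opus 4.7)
My plan is to split the analysis according to the definition of $(\sigma'_j, n'_j)$: \emph{Case A} when $a_{j-1} \ne p$, and \emph{Case B} when $a_{j-1} = p$, in which case I write $s \ge 1$ for the length of the chain $(a_{j-1}, a_{j-2}, \ldots, a_k) = (p, p-1, \ldots, p-1)$ with $a_{k-1} \ne p-1$ and $k := j - s$. I will use Lemma~\ref{lem: xi_i in terms of n_l} to pin down $m$, establish a key bound $m \le s + 1$ in Case B using Lemma~\ref{lem: bounds on xi}, produce the required $J$-witness for $j \in \mu(J)$ directly in the subcase where $j \notin J$, and reduce the remaining subcases to a digit-by-digit carry analysis of the identity $\xi_i = p^m n'_j$.

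In Case A, the hypothesis $\sigma_{i-m} = \sigma_{j-1}$ gives $i - m \equiv j - 1 \pmod f$, whence $n_{i-m} = n_{j-1} = n'_j$ by periodicity. Lemma~\ref{lem: xi_i in terms of n_l} rules out $m \ge 2$, since in that regime it would force $n'_j = n_{i-m} - (p^f - 1) \ne n_{j-1}$. So $m = 1$ and $j = i \in J$, and it remains to show that $i$ is not displaced by $\mu$.

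In Case B, the hypothesis gives $i - m \equiv k - 1 \pmod f$. A direct telescoping using the chain yields $n_{k-1} = p^f + c$, where $c := a_j p^{f-s-1} + a_{j+1} p^{f-s-2} + \cdots + a_{k-1}$; hence $n'_j = 1 + c \ge 1 + (p^{f-s} - 1)/(p-1)$. Combined with Lemma~\ref{lem: bounds on xi}, this establishes the key bound $m \le s + 1$: the assumption $m = s + 2$, after clearing denominators in $p^m n'_j < p^2(p^f - 1)/(p-1)$, yields $p^{s+2}(p-2) + p^2 < 0$, which is absurd for every $p \ge 2$. Since $i \equiv k - 1 + m \pmod f$, either $m \le s$ and $i$ lies cyclically in $\{k, k+1, \ldots, j-1\}$, or $m = s + 1$ and $i \equiv j$, so $j = i \in J$. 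In the first eventuality with $j \notin J$, the membership $i \in J \cap \{j - s, \ldots, j - 1\}$ directly shows that $j$ is a valid ``bad index'' $l$ in the definition of $\mu$, placing $j \in \mu(J)$.

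The main obstacle is the remaining situation in which $j \in J$; here we must rule out that $j$ is displaced from $J$ by $\mu$. My plan is to write $\xi_i - p^m n'_j = \sum_t c_t p^{f-t} = 0$, where each $c_t$ records the discrepancy between the signed digit $\epsilon_{i+t} r_{i+t}$ (with $\epsilon_{i+t} = \pm 1$ encoding the $J$-status of $i+t$) and the corresponding digit coming from $p^m n'_j$. Processing from lowest to highest order, the carries that must propagate --- controlled by the ranges $r_{\bullet}, a_{\bullet} \in [1, p]$ together with the hypothesis $a_{i-1} \ne p$ (Case A) and the chain structure $(a_{j-1}, \ldots, a_k) = (p, p-1, \ldots, p-1)$ (Case B) --- force $r_{i-1} = p$ and pin down each of $r_{i-2}, r_{i-3}, \ldots$ to take the value $1$ or $p - 1$ according to the status changes of consecutive indices in and out of $J$. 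These forced identities realize precisely the pattern $(1, p-1, \ldots, p-1, p)$ from the maximality condition on $J$, and a case analysis against the structural pattern that would be required of a ``bad'' $l \notin J$ at which $j = l - x$ could be replaced delivers the desired contradiction.
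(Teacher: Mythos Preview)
Your case decomposition is sound, and the easy pieces are correct: the reduction in Case~A to $m=1$ and $j=i$ via Lemma~\ref{lem: xi_i in terms of n_l}, the bound $m\le s+1$ in Case~B, and the direct verification that $j\in\mu(J)$ when $j\notin J$ and $m\le s$ are all fine. The problem is that what you call ``the main obstacle'' --- showing that $j$ is not displaced from $\mu(J)$ when $j\in J$ --- is precisely where all the content lies, and your proposal does not prove it. You describe a carry analysis but do not carry it out, and the one concrete claim you make is wrong: in Case~A the lowest-order term of $\xi_i/p - n_{i-1}$ is $\epsilon_{i-1}r_{i-1}-a_{i-1}$, and the congruence $\epsilon_{i-1}r_{i-1}\equiv a_{i-1}\pmod p$ (with $a_{i-1}\in[1,p-1]$) forces $r_{i-1}=a_{i-1}$ when $i-1\in J$ and $r_{i-1}=p-a_{i-1}$ when $i-1\notin J$, not $r_{i-1}=p$. (The conclusion $r_{i-1}=p$ is only forced when $m\ge 2$, not in general.) More seriously, you never explain how constraints on $r_{i-1},r_{i-2},\ldots$ are supposed to interact with the displacement pattern for $j$, which concerns the values of $a_j,a_{j+1},\ldots$ and the $J$-status of indices \emph{above} $j$; your final sentence (``a case analysis\ldots delivers the desired contradiction'') is not an argument.

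The paper's proof proceeds differently and avoids digit-by-digit carries altogether. In Case~A it assumes the displacement pattern $(a_i,\ldots,a_{s-1})=(p-1,\ldots,p-1,p)$ with $i+1,\ldots,s\notin J$, and from this derives an explicit upper bound on $\xi_i/p$ and an explicit lower bound on $n_{i-1}$, which are incompatible with $\xi_i=pn_{i-1}$. In the remaining case (your Case~B with $\xi_i=p^m(n_{i-m}-(p^f-1))$), it assumes $i-m+1,\ldots,j\notin J$ and again reaches a contradiction by comparing an explicit lower bound on $n_{i-m}-(p^f-1)$ with the upper bound on $\xi_i$ from Lemma~\ref{lem: bounds on xi}, first forcing $i=j+1$ and then finishing with a second inequality. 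These inequality arguments are short and self-contained; you should replace your sketched carry analysis with them.
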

\begin{proof} By Lemma~\ref{lem: n'_i is a unit}, we must have
  $m=v_p(\xi_i)$. Suppose firstly that $m=1$ and $\xi_i=pn_{i-1}$. We
  need to solve the equations $\sigma'_j=\sigma_{i-1}$ and
  $n'_j=n_{i-1}$.

  If $a_{j-1}=p$, then we have $\sigma'_j=\sigma_{s-1}$
  and $n'_j=n_{s-1}-(p^f-1)$, where~$s$ is
  the greatest integer less than~$j$ for which $a_{s-1}\ne p-1$.
  Since $\sigma'_j = \sigma_{i-1}$ by assumption, we find that $s = i$.
  But then 
  $n_{i-1}=n'_j=n_{i-1}-(p^f-1)$,
  which is not possible.

  Thus $a_{j-1}\ne p$, and hence 
  we have $\sigma'_j=\sigma_{j-1}$, so that~$j=i$. 
  We must show 
  that~$j = i \in\mu(J)$. By the
definition of~$\mu(J)$, this will be the case unless for some $s>i$ we
have $i+1,\dots,s\notin J$, and
$(a_{i},\dots,a_{s-1})=(p-1,\dots,p-1,p)$. Suppose then that
this holds; we must show that we cannot have $\xi_i=pn_{i-1}$ after
all. 
Now, by definition and the assumption that $i+1,\dots,s\notin J$ we
have \begin{align*}\xi_i/p&=p^{f-1} r_i-p^{f-2}r_{i+1}-\dots
  +(-1)^{s+1\notin J}p^{f+i-2-s}r_{s+1}+\dots+(-1)^{i-1\notin
    J}r_{i-1}\\ &\le p^f -(p^{f-2}+\dots+
                  p^{f+i-s-1})+(p^{f+i-2-s}+\dots+1)p\\&=
                                                       p^f-(p^{f-2}+\dots
                                                       + p^{f+i-s})+(p^{f+i-2-s}+\dots+p)\end{align*}
                while \begin{align*}n_{i-1}&=p^{f-1}
                                             a_i+p^{f-2}a_{i+1}+\dots+a_{i-1}\\
                                           &\ge
                                             p^{f-1}(p-1)+\dots+p^{f+i+1-s}(p-1)+p^{f+i-s}p+p^{f+i-1-s}+\dots+1
                        \\&=p^f+p^{f+i-1-s}+\dots+1, \end{align*}which
                      gives the required contradiction.

Having disposed of the case that $m=1$ and $\xi_i=pn_{i-1}$, it
follows from Lemma~\ref{lem: xi_i in terms of n_l} that we may assume
that  $\xi_i=p^m\bigl(n_{i-m}-(p^f-1)\bigr)$. 
We show first that we cannot have~$a_{j-1}\ne p$. Indeed, if this
occurs, then by definition we have $n'_j=n_{j-1}$ and
$\sigma'_j=\sigma_{i-1}$, so that the equations we need to solve are
$i-m=j-1$, and $n_{i-m}-(p^f-1)=n_{j-1}$, which are mutually inconsistent,
since together they imply that
$n_{j-1} - (p^f - 1) = n_{j-1}$.

We are thus reduced to the case when~$a_{j-1}=p$, and,  by the definition of $n'_j$, we
see (since $\sigma'_j=\sigma_{i-m}$) that $i-m$
must be congruent to the greatest integer~$i'$ less than~$j-1$ with
$a_{i'}\ne p-1$. Replacing~$i$ by something congruent to it
modulo~$f$, we may assume that $i-m=i'$, so that $a_{i-m}\ne p-1$, $a_{i-m+1}=\dots =a_{j-2}=p-1$, and
$a_{j-1}=p$. 
Again, we must show that this implies that $j\in\mu(J)$.
By the definition of~$\mu(J)$, this will be the case unless 
$i-m+1,\dots,j-2,j-1,j\notin J$. 
Since we are assuming that
$i\in J$, this implies in particular that $j$ is contained in the interval~$[i-m,i)$.
We now show that this
leads to a contradiction. 
Consider the equation $\xi_i/p^m=n_{i-m}-(p^f-1)$. From the
definitions and the assumptions we are making, we
have \begin{align*}n_{i-m}&=p^{f-1}a_{i-m+1}+\dots+p^{f-x}a_{i-m+x}+\dots+a_{i-m}\\&=p^f+p^{f-m+i-j}a_j+\dots+a_{i-m}, \end{align*}
so that \begin{align*} n_{i-m}-(p^f-1)&=1+
  p^{f-m+i-j}a_j+\dots+a_{i-m}\\&>
  p^{f-m+i-j}+p^{f-m+i-j-1}+\dots+1.\end{align*}
Thus 
\numequation
\label{eqn:preliminary inequality}
	\xi_i = p^m \bigl(n_{i-m}- (p^f -1 )\bigr) > 
  p^{f+i-j}+p^{f+i-j-1}+\dots+p^m.
  \end{equation}
Since
$\xi_i \le p^{2}(p^f-1)/(p-1)$ by Lemma~\ref{lem: bounds on xi},
we conclude that in
particular \[(p^f-1)/(p-1)> \xi_i/p^2 > 
p^{f+i-j - 2} = p^{(f-1)+(i-j-1)},\]which is only possible
if $i=j+1$. Assume now that this is the case.  Then we may rewrite~(\ref{eqn:preliminary inequality}) in the form
\numequation
	\label{eqn:inequality}
	\xi_i = p^m \bigl(n_{i-m}- (p^f -1 )\bigr) > 
  p^{f+1}+p^{f}+\dots+p^m.
  \end{equation}
We also find that
$i-m+1,\dots,i-1\notin J$, so that, from the definition of $\xi_i$ (and
taking into account the fact that $i \in J$), 
we compute
$$\begin{aligned}
\xi_i & =  \ p^{f} r_i+\dots +(-1)^{i-m\notin
J}p^mr_{i-m}-(p^{m-1}r_{i-m+1}+\dots+pr_{i-1}) \\
\le & \ p^{f} r_i+\dots +(-1)^{i-m\notin
J}p^mr_{i-m} \\
\le & \ (p^f + \ldots + p^m)p = 
 p^{f+1} + p^f + \ldots + p^{m+1}. \end{aligned}$$
This contradicts~(\ref{eqn:inequality}), and completes the argument.
\end{proof}

\emergencystretch=3em
\printbibliography
\bigskip
 \end{document}